\documentclass{amsart}
\usepackage{amsmath, amsthm, amssymb, amscd, mathrsfs, eucal, epsfig}
\usepackage{pinlabel}

\begin{document}

\newtheorem{theorem}{Theorem}[section]
\newtheorem{lemma}[theorem]{Lemma}
\newtheorem{proposition}[theorem]{Proposition}
\newtheorem{corollary}[theorem]{Corollary}
\newtheorem{conjecture}[theorem]{Conjecture}
\newtheorem{question}[theorem]{Question}
\newtheorem{problem}[theorem]{Problem}
\newtheorem*{claim}{Claim}
\newtheorem*{criterion}{Criterion}
\newtheorem*{rat_thm}{Rationality Theorem}
\newtheorem*{denom_thm}{Denominator Theorem}
\newtheorem*{limit_thm}{Limit Theorem}
\newtheorem*{surgery_thm}{Surgery Theorem}

\theoremstyle{definition}
\newtheorem{definition}[theorem]{Definition}
\newtheorem{construction}[theorem]{Construction}
\newtheorem{notation}[theorem]{Notation}

\theoremstyle{remark}
\newtheorem{remark}[theorem]{Remark}
\newtheorem{example}[theorem]{Example}

\numberwithin{equation}{subsection}

\def\Z{\mathbb Z}
\def\N{\mathbb N}
\def\R{\mathbb R}
\def\Q{\mathbb Q}
\def\D{\mathcal D}
\def\E{\mathcal E}
\def\RR{\mathcal R}
\def\P{\mathcal P}
\def\F{\mathcal F}

\def\cl{\textnormal{cl}}
\def\scl{\textnormal{scl}}
\def\homeo{\textnormal{Homeo}}
\def\rot{\textnormal{rot}}
\def\area{\textnormal{area}}

\def\id{\textnormal{id}}
\def\Id{\textnormal{Id}}
\def\SL{\textnormal{SL}}
\def\GL{\textnormal{GL}}
\def\Sp{\textnormal{Sp}}
\def\PSL{\textnormal{PSL}}
\def\Hom{\textnormal{Hom}}
\def\Out{\textnormal{Out}}
\def\Aut{\textnormal{Aut}}
\def\freq{\textnormal{freq}}
\def\length{\textnormal{length}}
\def\fill{\textnormal{fill}}
\def\rank{\textnormal{rank}}
\def\til{\widetilde}
\def\supp{\textnormal{supp}}
\def\inte{\textnormal{int}}
\def\conv{\textnormal{conv}}
\def\sail{\textnormal{sail}}
\def\sign{\textnormal{sign}}
\def\CAT{\textnormal{CAT}}

\title{Scl, sails and surgery}
\author{Danny Calegari}
\address{Department of Mathematics \\ Caltech \\
Pasadena CA, 91125}
\email{dannyc@its.caltech.edu}

\date{11/1/2010, Version 0.20}

\dedicatory{This paper is dedicated to the memory of John Stallings.}
\begin{abstract}
We establish a close connection between stable commutator length in free groups and the geometry
of {\em sails} (roughly, the boundary of the convex hull of the set of integer lattice points) in
integral polyhedral cones. This connection allows us to show that the $\scl$ norm is piecewise
rational linear in free products of Abelian groups, and that it can be computed via integer programming. 
Furthermore, we show that the $\scl$
spectrum of nonabelian free groups contains elements congruent to every rational number modulo $\Z$,
and contains well-ordered sequences of values with ordinal type $\omega^\omega$.
Finally, we study families of elements $w(p)$ in free groups obtained by {\em surgery} on a fixed 
element $w$ in a free product of Abelian groups of higher rank, and show that $\scl(w(p)) \to \scl(w)$
as $p \to \infty$.
\end{abstract}

\maketitle

\section{Introduction}

Bounded cohomology, introduced by Gromov in \cite{Gromov_bounded}, proposes to {\em quantify}
homology theory, replacing groups and homomorphisms with Banach spaces and bounded linear
maps. In principle, the information contained in the bounded cohomology of a space is
incredibly rich and powerful; in practice, except in (virtually) trivial cases, this information
has proved impossible to compute. Burger-Monod \cite{Burger_Monod} wrote the following in 2000:
\begin{quote}
Although the theory of bounded cohomology has recently found many applications in various 
fields \dots \,for discrete groups it remains scarcely accessible to computation. As a matter of fact,
almost all known results assert either a complete vanishing or yield intractable
infinite dimensional spaces.
\end{quote}
Perhaps the best known exceptions are Gromov's theorem \cite{Gromov_bounded}
that the norm of the fundamental
class of a hyperbolic manifold is proportional to its volume, and Gabai's theorem \cite{Gabai}
that the Gromov norm on $H_2$ of an atoroidal $3$-manifold is equal to the Thurston norm.
Even these results only describe a finite dimensional sliver of the (typically) uncountable
dimensional bounded cohomology groups of the spaces in question.

The case of $2$-dimensional bounded cohomology is especially interesting, since it
concerns {\em extremal maps of surfaces into spaces}. In virtually every category
it is important to be able to construct and classify
surfaces of least complexity mapping to a given
target; we mention only minimal surface theory and Gromov-Witten theory as 
prominent examples. In the topological category one wants to minimize the
{\em genus} of a surface mapping to some space subject to further constraints
(e.g.\/ that the image represent a given homology class, that it be $\pi_1$-injective,
that it be a Heegaard surface, etc.) For many applications (e.g.\/ inductive arguments)
it is crucial to {\em relativize} this problem: given a space $X$ and a
(homologically trivial) loop $\gamma$ in $X$, one wants to find a surface
of least complexity (again, perhaps subject to further constraints)
mapping to $X$ in such a way as to fill $\gamma$ (i.e.\/ $\gamma$ becomes the boundary
of the surface). The problem of computing the
genus of a knot (in a $3$-manifold) is of this kind. On the algebraic side, the
relevant (bounded) homological tool to describe complexity in this context
is {\em stable commutator length}. In a group $G$, the {\em commutator length}
$\cl(g)$ of an element $g$ is the least number of commutators whose product is $g$,
and the {\em stable commutator length} $\scl(g)$ is the limit 
$\scl(g) = \lim_{n \to \infty} \cl(g^n)/n$.
Here, until very recently, the landscape was even more barren: there
were virtually no examples of groups or spaces in which stable commutator length
could be calculated exactly where it did not vanish identically (\cite{Zhuang}
is an interesting exception).

The paper \cite{Calegari_pickle} successfully showed how to compute stable
commutator length in a highly nontrivial example: that of free groups. The result
is very interesting: stable commutator length turns out to be {\em rational},
and for every rational $1$-boundary, there is a (possibly not unique) {\em best}
surface which fills it rationally, in a precise sense. 
The case of a free group is important for several reasons:
\begin{enumerate}
\item{Computing $\scl$ in free groups gives universal estimates for $\scl$ in arbitrary groups}
\item{The category of surfaces and maps between them up to homotopy is a fundamental mathematical
object; studying $\scl$ in free and surface groups gives a powerful new framework in which
to explore this category}
\item{Free groups are the simplest examples of hyperbolic groups, and are a 
model for certain other families of groups (mapping class groups, $\Out(F_n)$,
groups of symplectomorphisms) that exhibit hyperbolic behavior}
\end{enumerate}
The paper \cite{Calegari_pickle} gives an
algorithm to compute $\scl$ on elements in a free group. Refinements 
(see \cite{Calegari_scl}, Ch.~4) show how to modify this algorithm to make
it polynomial time in word length. Hence it has become possible to calculate (by computer) 
the value of $\scl$ for words of length $\sim 60$ in a free group on
two generators. The utility of this is to make it possible to perform
experiments, which reveal the existence of hitherto unsuspected phenomena in the 
$\scl$ spectrum of a free group. These phenomena suggest many new directions for research,
some of which are pursued in this paper. 

Figure~\ref{histogram_alt} is a histogram of values of $\scl$ between $1$ and $\frac 5 4$ on
alternating words of length $28$ in the (commutator subgroup of the)
free group on two generators. Here a word is {\em alternating} if the
letters alternate between one of $a^{\pm 1}$ and one of $b^{\pm 1}$.
\begin{figure}[htpb]
\labellist
\small\hair 2pt
\pinlabel $1$ at 10 -20
\pinlabel $\frac{15}{14}$ at 152.857142 -20
\pinlabel $\frac{13}{12}$ at 176.666667 -20
\pinlabel $\frac{11}{10}$ at 210 -20
\pinlabel $\frac{9}{8}$ at 260 -20
\pinlabel $\frac{8}{7}$ at 295.714286 -20
\pinlabel $\frac{7}{6}$ at 343.333333 -20
\pinlabel $\frac{17}{14}$ at 438.571428 -20
\pinlabel $\frac{19}{16}$ at 385 -20
\pinlabel $\frac{6}{5}$ at 410 -20
\pinlabel $\frac{5}{4}$ at 510 -20
\endlabellist
\centering
\includegraphics[scale=0.5]{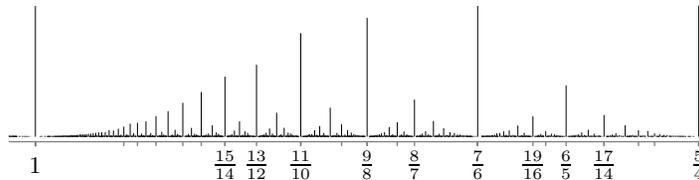}
\caption{Histogram of values of $\scl$ on alternating words of length $28$ in $F_2$.
Heights of the bars at $1$ and $\frac 5 4$ have been truncated to fit in the Figure.}\label{histogram_alt}
\end{figure}
The most salient feature of this histogram is its {\em self-similarity}. Such self-similarity
is indicative of a power law roughly of the form $\freq(p/2q) \sim q^{-\delta}$ for some $\delta$
which in this case is about $2.1$ (although there is some interesting irregularity even
in this figure, e.g.\/ the curiously ``high'' spike at $\frac {29} {28}$ 
and ``low'' spike at $\frac {39} {41}$).

Unfortunately, the algorithm developed
in \cite{Calegari_scl} is not adequate to explain the structure evident in Figure~\ref{histogram_alt}.
One reason is that this algorithm reduces the calculation of $\scl$ on a particular
element of $F_2$ to a linear programming problem, the particulars of which depend in quite
a dramatic way on the word in question. Moreover, though the algorithm is polynomial
time in word length, it is {\em not} polynomial time in ``log exponent word length'', i.e. the
notation which abbreviates a word like $aaaaaaaba^{-1}a^{-1}a^{-1}baab^{-1}b^{-1}$ to
$a^7ba^{-3}ba^2b^{-2}$. This is especially vexing in view of the fact that experiments suggest
a rich structure for the values of $\scl$ on families of words which differ only in the
values of their exponents. This is best illustrated with an example.
\begin{example}\label{simple_harmonic_example}
In $F_2$ with generators $a,b$, experiments suggest a formula
$$\scl(aba^{-1}b^{-1}ab^{-n}a^{-1}b^n) = 1 - \frac 1 {2n-2}$$ 
valid for $n\ge 2$.
\end{example}
There are two interesting aspects of this example: the fact that the values of $\scl$
(apparently) converge to a (rational) limit, and the nature of the error term (which is
a harmonic series). One of the goals of this paper is to develop a different approach
to computing $\scl$ in free groups (and some other classes of groups) which makes it
possible to rigorously verify and to explain the phenomena exhibited 
in Example~\ref{simple_harmonic_example} and more generally.

\subsection{Sails}

One surprising thing to come out of this paper is the discovery of a close connection between
stable commutator length in free groups, and the geometry of {\em sails} in integral polyhedral
cones. Given an integral polyhedral cone $V$ the {\em sail} of $V$ is the convex hull of
$\D+V$, where $\D$ is the set of integral lattice points in certain open faces of $V$ (this is
a generalization of the usual definition of a sail, in which one takes for $\D$ the set of all
integer lattice points in $V$ except for the vertex of the cone). Sails were introduced by
Klein, in his attempt to generalize to higher dimensions the theory of continued fractions. 
Given a cone $V$, define the {\em Klein function} $\kappa$ to be the function on $V$, linear
on rays, that is equal to $1$ exactly on the sail. It turns out that calculating $\scl$ on
chains in free products of Abelian groups reduces to the problem of maximizing a function $-\chi/2$
on a certain rational polyhedron (obtained by intersecting the product of two integral polyhedral cones
with a rational affine subspace). The function $-\chi/2$ is the sum of two terms, one linear, and
one which is (the restriction of) a sum of Klein functions associated to the two polyhedral cones.

This connection will be discussed further in a future paper, especially as it relates to the
statistical features evident e.g.\/ in Figure~\ref{histogram_alt}.

\subsection{Stallings}

If $A$ and $B$ are groups, one can build a $K(A*B,1)$ by wedging a $K(A,1)$ and a $K(B,1)$
along a basepoint. Given a surface $S$ and a map $f:S \to K(A*B,1)$ one can try to simplify $S$
and $f$ in two complementary ways: either by simplifying the part of $S$ that maps to
the two factors, or by simplifying the part that maps to the basepoint. In a precise sense,
the first strategy was pursued in \cite{Calegari_pickle} whereas the second strategy is
pursued in this paper. 

An interesting precursor of this latter approach is John Stallings' last paper \cite{Stallings},
which uses topological methods to factorize products of commutators in
a free product of groups into terms which are localized in the factors. It is a pleasure
to acknowledge my own great intellectual debt to John, and it seems especially serendipitous to
discover, in relatively unheralded work he did in the later part of his life, 
some beautiful new ideas which continue to inform and inspire. 

\subsection{Main results}

We now briefly describe the contents of the paper. In \S~\ref{background_section} we give
definitions, standardize notation, and recall some fundamental facts from the theory of stable
commutator length, especially the geometric definition in terms of maps of surfaces to spaces.

In \S~\ref{free_product_section} we describe a method to compute stable commutator length in
free products of Abelian groups. We show that the computation can be reduced to a kind of integer
programming problem, exhibiting a natural connection between the stable commutator length in
free groups, and the geometry of {\em sails} in integral polyhedral cones (this is explained in
more detail in the sequel). As a consequence, we derive our first main theorem:
\begin{rat_thm}
Let $G = *_i A_i$ be a free product of finitely many finitely generated free Abelian groups.
Then $\scl$ is a piecewise rational
linear function on $B_1^H(G)$. Moreover, there is an algorithm to compute $\scl$ in any
finite dimensional rational subspace.
\end{rat_thm}

In \S~\ref{surgery_section} we exploit the relationship between $\scl$ and sails developed in the
previous section, and use this to compute $\scl$ on an explicit multi-parameter infinite family of
chains in $B_1^H(F_2)$. This calculation enables us to rigorously verify the existence of certain
phenomena in the $\scl$ spectrum that were suggested by experiments. Explicitly, this calculation
allows us to prove:
\begin{denom_thm}
The image of a nonabelian free group of rank at least $4$ under $\scl$ in
$\R/\Z$ is precisely $\Q/\Z$.
\end{denom_thm}
and
\begin{limit_thm}
For each $n$, the image of the free group $F_n$ under $\scl$ contains a well-ordered sequence of values
with ordinal type $\omega^{\lfloor n/4 \rfloor}$. 
The image of $F_\infty$ under $\scl$ contains a well-ordered sequence of values with 
ordinal type $\omega^\omega$.
\end{limit_thm}

Finally, we obtain a result that explains the existence of many limit points in the $\scl$ spectrum
of free groups. Let $A*B$ and $A'*B$ be free products of free Abelian groups.
A {\em line of surgeries} is a family of surjective homomorphisms $\rho_p:A*B \to A'*B$ determined
by a linear family of surjective homomorphisms on the factors.

\begin{surgery_thm}
Fix $w \in B_1^H(A*B)$ and let $\rho_p:A*B \to A'*B$ be a line of surgeries, constant on the
second factor, and surjective on the first factor with $\rank(A') = \rank(A)-1$.
Define $w(p)=\rho_p(w)$.
Then $\lim_{p \to \infty} \scl(w(p)) = \scl(w)$.
\end{surgery_thm}

\section{Background}\label{background_section}

This section contains definitions and facts which will be used in the sequel. A basic reference
is \cite{Calegari_scl}.

\subsection{Definitions}\label{definition_subsection}

The following definition is standard; see \cite{Bavard}.

\begin{definition}
Let $G$ be a group, and $g \in [G,G]$. The {\em commutator length} of $g$, denoted
$\cl(g)$, is the smallest number of commutators in $G$ whose product is $g$. 
The {\em stable commutator length} of $g$, denoted $\scl(g)$, is the limit
$$\scl(g) = \lim_{n \to \infty} \frac {\cl(g^n)} {n}$$
\end{definition}

Commutator length and stable commutator length can be extended to finite linear sums of
groups elements as follows:
\begin{definition}\label{scl_sum_definition}
Let $G$ be a group, and $g_1,g_2,\cdots,g_m$ elements of $G$ whose product is in $[G,G]$.
Define
$$\cl(g_1 + \cdots + g_m) = \inf_{h_i \in G} \cl(g_1h_1g_2h_1^{-1}\cdots h_{m-1}g_mh_{m-1}^{-1})$$
and
$$\scl(g_1 + \cdots + g_m) = \lim_{n \to \infty} \frac {\cl(g_1^n + \cdots + g_m^n)} {n}$$
\end{definition}
\begin{remark}
Note that $\scl$ will be finite if and only if the product of the $g_i$ is trivial in $H_1(G;\Q)$.
\end{remark}

The function $\scl$ may be interpreted geometrically as follows.

\begin{definition}
Let $S$ be a compact surface with components $S_i$. Define
$$\chi^-(S) = \sum_i \min(0,\chi(S_i))$$
where $\chi$ denotes Euler characteristic.
\end{definition}

In words, $\chi^-(S)$ is the sum of the Euler characteristics over all components of $S$
for which $\chi$ is non-positive.

\begin{definition}\label{admissible_surface}
Let $g_1,g_2 \cdots g_m \in G$ be given so that the product of the $g_i$ is trivial in
$H_1(G;\Q)$.
Let $X$ be a space with $\pi_1(X) = G$. For each $i$, Let $\gamma_i:S^1 \to X$ represent the
conjugacy class of $g_i$ in $G$. 

Suppose $S$ be a compact oriented surface. A map $f:S \to X$ for which there is a diagram
$$ \begin{CD}
\partial S @>i>> S \\
@V\partial fVV  @VVfV \\
\coprod_i S^1 @>\coprod_i\gamma_i>> X 
\end{CD}$$
where $i:\partial S \to S$ is the inclusion map, and $\partial f_*[\partial S] = n[\coprod_i S^1]$ in $H_1$
for some integer $n(S) \ge 0$, is said to be {\em admissible}, of degree $n(S)$.
\end{definition}

\begin{remark}
The sign of $n(S)$ is changed by orienting $S$ oppositely.
\end{remark}

The geometric definition of $\scl$ asks to minimize the ratio of $-\chi^-$ to degree over all
admissible surfaces.

\begin{proposition}\label{geometric_formula_for_scl}
With notation as above, there is an equality
$$\scl_G(g_1 + \cdots + g_m) = \inf_{S} \frac {-\chi^-(S)} {2n(S)}$$
over all admissible compact oriented surfaces $S$.
\end{proposition}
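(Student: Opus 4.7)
The plan is to establish the equality by proving both inequalities, using the standard dictionary that a word \(w \in [G,G]\) is a product of \(k\) commutators if and only if \(w\) bounds a map of a once-punctured oriented surface of genus \(k\) into \(X = K(G,1)\). Subadditivity of \(\cl\) under powering gives \(\scl(c) = \inf_n \cl(c^n)/n\) by Fekete's lemma, so a bound on \(\cl(c^n)\) for any single \(n\) already bounds \(\scl(c)\).

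For the forward inequality \(\inf \le \scl\), fix \(n\), let \(k_n = \cl(g_1^n + \cdots + g_m^n)\), and choose conjugators \(h_1,\ldots,h_{m-1} \in G\) realizing the infimum in Definition~\ref{scl_sum_definition}. The word \(W_n := g_1^n h_1 g_2^n h_1^{-1} \cdots h_{m-1} g_m^n h_{m-1}^{-1}\) bounds a map \(T_n \to X\) from a once-punctured oriented surface of genus \(k_n\); its boundary decomposes into \(m\) subarcs representing the \(\gamma_i^n\) alternating with subarcs representing the \(h_i^{\pm 1}\). For each \(i = 1, \ldots, m-1\), attach an orientable band to \(T_n\) bridging the \(h_i\)- and \(h_i^{-1}\)-subarcs, collapsing the band onto the loop \(h_i\) in \(X\); each attachment is a \(1\)-handle that splits one boundary circle into two and decreases \(\chi\) by one. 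The resulting compact oriented admissible surface \(S_n\) has degree \(n\), has \(m\) boundary circles (mapping to the \(\gamma_i^n\)), and satisfies \(-\chi^-(S_n) = -\chi(S_n) = 2k_n + m - 2\). Dividing by \(2n\) and letting \(n \to \infty\) gives the inequality.

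For the reverse inequality \(\scl \le \inf\), fix any admissible \(S\) of degree \(n\) with \(-\chi^-(S) = e\). After discarding closed and positive-Euler-characteristic components (which only decreases \(-\chi^-\)) and tubing together distinct components by cylinders (which preserves \(\chi\)), we may assume \(S\) is connected with \(\chi(S) = -e\). Let \(b\) be the number of boundary circles of \(S\), with the \(j\)th mapping to \(\gamma_{\sigma(j)}^{d_j}\) for some \(\sigma(j) \in \{1,\ldots,m\}\) and positive integer \(d_j\) (with the total degree on each \(\gamma_i\) equal to \(n\)). Cut \(S\) along \(b-1\) properly embedded arcs joining the \(b\) boundary circles into a single one; this raises \(\chi\) by \(b-1\) and produces a once-punctured oriented surface of genus \((e - b + 2)/2\) whose boundary spells a word of the form \(\gamma_{\sigma(1)}^{d_1} h_1 \gamma_{\sigma(2)}^{d_2} h_1^{-1} \cdots\). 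Therefore the \(b\)-term chain \(c_S := \sum_j \gamma_{\sigma(j)}^{d_j}\) satisfies \(\cl(c_S) \le (e - b + 2)/2 \le e/2\). Since \(c_S\) has the same class in \(H_1(G;\Q)\) as \(n \cdot (g_1+\cdots+g_m)\), the homogeneity and homological invariance of \(\scl\) on chains yield \(n \cdot \scl(g_1+\cdots+g_m) = \scl(c_S) \le \cl(c_S) \le e/2\), hence \(\scl(g_1+\cdots+g_m) \le e/(2n) = -\chi^-(S)/(2n(S))\).

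The main obstacle is justifying the identification \(\scl(c_S) = n \cdot \scl(g_1+\cdots+g_m)\): this relies on the fact that \(\scl\), viewed as a seminorm on the quotient \(B_1^H(G;\Q)\), is well-defined on homology classes and homogeneous of degree one, so chains with the same class have the same \(\scl\) and scaling the class by \(n\) scales \(\scl\) by \(n\). Additional bookkeeping is needed to choose the cutting arcs so that the cyclic order of the \(\gamma_{\sigma(j)}^{d_j}\) along the resulting boundary is the one appearing in the chain, and to handle disk components carefully since they cap off boundary circles and can alter the degree; both are technical but routine.
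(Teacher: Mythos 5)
Your forward inequality ($\inf \le \scl$) is essentially correct: taking the word $W_n$ realizing $\cl$, building a once-punctured genus-$k_n$ surface, and attaching $m-1$ bands over the conjugators gives an admissible surface of degree $n$ with $-\chi^- = 2k_n + m - 2$, and dividing by $2n$ and letting $n\to\infty$ yields the inequality. (The paper itself does not prove this proposition but cites \cite{Calegari_scl}, Prop.~2.68.)

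The reverse inequality contains a genuine gap. The problem is that the bound you obtain from $\cl$ is off by exactly the quantity your auxiliary constructions cannot recover. First, the parenthetical ``tubing together distinct components by cylinders (which preserves $\chi$)'' is false: tubing two components by a tube (remove a disk from each, glue in an annulus) \emph{decreases} $\chi$ by $2$, so if the original surface has $k$ relevant components, your connected surface has $\chi = -e - 2(k-1)$, not $-e$; carrying this through, the once-punctured surface after cutting has genus $(e + 2k - b)/2$, and the required inequality $(e + 2k - b)/2 \le e/2$ becomes $b \ge 2k$, i.e.\/ \emph{every} component must have at least two boundary circles. Second, even for a connected surface the inequality you invoke, $(e-b+2)/2 \le e/2$, requires $b\ge 2$; for $m=1$ an admissible surface can have a single boundary circle (a once-punctured genus-$g$ surface with boundary $\gamma^n$), and then $\cl(\gamma^n)\le g$ gives only $\scl(g_1)\le g/n$, whereas the claim demands $\scl(g_1)\le (2g-1)/(2n)$, which is strictly smaller by $1/(2n)$. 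This $1/(2n)$ cannot be recovered by taking copies, covering, or cutting: the ``$-2$ per tube'' exactly cancels the ``$+1$ per additional boundary circle'', so the $\cl$-based bound is permanently stuck at $g/n$. Proving $\scl(g) \le (2g-1)/(2n)$ requires a different ingredient — for instance the quasimorphism estimate $|\phi(\prod_{i=1}^g [a_i,b_i])| \le (2g-1)D(\phi)$ together with Bavard duality, or a more careful normalization/compression argument as in \cite{Calegari_scl}. Finally, the treatment of disk components is not as routine as you suggest: a disk whose boundary covers $\gamma_i$ with nonzero degree forces $g_i$ to be torsion, and one must separately argue that torsion terms may be dropped from the chain without changing $\scl$ and simultaneously adjust the degree bookkeeping; as stated, discarding such disks breaks the admissibility condition $\partial f_*[\partial S]=n[\coprod_i S^1]$.
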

See \cite{Calegari_scl}, Prop.~2.68 for a proof.

\medskip

If $f:S \to X$ is admissible, then $S$ and therefore $\partial S$ are oriented.
Some components of $\partial S$ might map by $\partial f$ to $\coprod_i S^1$ with zero
or even negative degree. Boundary components mapping with opposite degree to the same
circle can be glued up after passing to a suitable cover. Hence the following proposition can
be proved.

\begin{proposition}\label{admissible_maps_oriented}
If $f:S \to X$ is admissible, there is $f':S' \to X$ admissible such that
$\partial f':\partial S' \to \coprod_i S^1$ has positive degree on every component, and
$-\chi^-(S')/2n(S') \le -\chi^-(S)/2n(S)$.
\end{proposition}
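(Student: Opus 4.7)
The plan is to do this in two stages: first cap off every boundary circle on which $\partial f$ has degree zero, and then cancel the remaining negative-degree boundary circles by pairing them with positive-degree ones after passing to a convenient finite cover of $S$. Throughout, I would keep track of the ratio $-\chi^-/2n$ and check that it does not increase.

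For the first stage, a boundary component $c$ with $\partial f|_c$ of degree $0$ on its target $S^1_i$ is null-homotopic there, so after a homotopy of $f$ I may assume $c$ maps to a point and then cap it off with a disk mapping to that point. This leaves $n$ unchanged (the contribution of $c$ to $\partial f_*[\partial S]$ was already zero) while raising the Euler characteristic of the containing component by $1$; a short case check (whether the component originally had $\chi \le -1$, or was an annulus with $\chi = 0$) shows that $-\chi^-$ does not increase. After iterating, and discarding any stray sphere components, every remaining boundary degree $d_{i,j}$ is nonzero and every component of $S$ has $\chi \le 0$.

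For the second stage, set $m = \operatorname{lcm}_{i,j}|d_{i,j}|$ and let $\til S \to S$ be the finite cover corresponding to the kernel of
$$\pi_1(S) \xrightarrow{f_*} H_1(X;\Z) \twoheadrightarrow H_1(X;\Z/m).$$
Since each $|d_{i,j}|$ divides $m$, the image of $[c_{i,j}]$ in $H_1(X;\Z/m)$ has order exactly $m/|d_{i,j}|$; because this kernel is normal, every lift of $c_{i,j}$ in $\til S$ wraps $c_{i,j}$ exactly $m/|d_{i,j}|$ times and hence maps to $S^1_i$ with degree $m\cdot\sign(d_{i,j})$. So in $\til S$ every boundary degree on each $S^1_i$ is $\pm m$, and because a finite cover multiplies both $\chi^-$ and $n$ by the total cover degree, the ratio $-\chi^-/2n$ is unchanged.

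Finally, on each $S^1_i$ the total signed boundary degree of $\til S$ is a non-negative multiple of $n$, so the number of $+m$-boundaries is at least the number of $-m$-boundaries. Pair each $-m$-boundary with a distinct $+m$-boundary on the same target and glue them along an orientation-reversing identification compatible with $\partial\til f$; each such gluing preserves $\chi$ (we are gluing along circles), preserves $n$ (the paired degrees cancel), and eliminates the two boundary components. The resulting admissible surface $S'$ has all boundary degrees equal to $+m$, and $-\chi^-(S')/2n(S') \le -\chi^-(S)/2n(S)$. The main technical point is the cover construction itself — specifically, verifying the winding $m/|d_{i,j}|$ of each lift — which boils down to the arithmetic fact that $\gcd(m,d_{i,j})=|d_{i,j}|$; once this is in hand the combinatorial matching is immediate, because every boundary of $\til S$ has the same absolute degree $m$.
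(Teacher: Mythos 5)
Your two-stage plan (cap off zero-degree boundary circles, then pass to a finite cover and cancel opposite-degree boundaries by gluing) is the same strategy the paper indicates, and the capping-off stage is essentially correct --- though after capping you should discard \emph{all} closed components, not just spheres: capping a genus-$g\ge 1$ piece with one boundary produces a closed surface with $\chi<0$, which would otherwise worsen $-\chi^-$.

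The gap is in the cover. You define $\til S$ via the composite $\pi_1(S)\xrightarrow{f_*}H_1(X;\Z)\twoheadrightarrow H_1(X;\Z/m)$ and claim the image of a boundary class $[c_{i,j}]$ has order exactly $m/|d_{i,j}|$ because $\gcd(m,d_{i,j})=|d_{i,j}|$. But $[c_{i,j}]$ maps to $d_{i,j}[\gamma_i]$ in $H_1(X;\Z)$, and the order of that element in $H_1(X;\Z/m)$ is controlled by $[\gamma_i]$, not only by the arithmetic of $d_{i,j}$ and $m$. Your computation silently assumes $[\gamma_i]$ generates a $\Z$ summand; if $[\gamma_i]$ is torsion, or merely not primitive, the order drops. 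Worse, in the case this whole theory is built for --- a single $g\in[G,G]$, e.g.\ $g=[a,b]$ in $F_2$ --- one has $[\gamma]=0$ in $H_1(X;\Z)$, so every $[c_{i,j}]$ maps to $0$ in $H_1(X;\Z/m)$, each boundary circle lifts trivially, and the cover does nothing at all to the boundary degrees.

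The cover must instead be constructed intrinsically from $S$: one wants a finite cover $\til S\to S$ in which each circle over $c_{i,j}$ covers it with degree $m/|d_{i,j}|$ (so its image degree is $\pm m$). That amounts to choosing, on each component of $S$, a finite quotient $\pi_1(S)\to Q$ sending each $[c_{i,j}]$ to an element of order $m/|d_{i,j}|$. Because $\pi_1(S)$ is free and the boundary classes satisfy a single relation (their product is a product of commutators), such a quotient exists after possibly enlarging $m$, but this is a genuine construction, not the one-line arithmetic fact you cite, and it is precisely the content of the reference to \cite{Calegari_scl}. Once such a cover is in hand, your cancellation step (pairing $+m$ with $-m$ boundaries, checking $\chi$ and $n$ scale correctly and the gluing respects orientations and the map to $X$) goes through.
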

See \cite{Calegari_scl}, Cor.~4.29 for a proof. An admissible surface with the property
discussed above is said to be {\em positive}. In the sequel all the admissible surfaces
we discuss will be positive, even if we do not explicitly say so.

\medskip

\begin{definition}
An admissible surface $S$ realizing the infimum for $g_1,\cdots,g_m$ (i.e. for which $\scl(g_1 + \cdots + g_m) = -\chi^-(S)/2n(S)$)
is said to be {\em extremal}.
\end{definition}

\begin{remark}
Extremal surfaces are $\pi_1$-injective, and have other useful properties.
\end{remark}

Given a group $G$, let $(C_*(G;\R),\partial)$ denote the complex of real group chains,
whose homology is the real (group) homology of $G$ (see Mac Lane \cite{Maclane}, Ch.~IV, \S~5). 
Let $B_n(G;\R)$ denote the subspace of real group $n$-boundaries. By
Definition~\ref{scl_sum_definition}, we can think of $\scl$ as a function on the set of
integral group $1$-boundaries. This function is linear on rays and subadditive, and therefore
admits a unique continuous linear extension to $B_1(G)$.

Let $H(G)$ (for {\em homogeneous}) denote the subspace of $B_1(G)$
spanned by chains of the form $g^n - ng$ and $g - hgh^{-1}$
for all $g,h \in G$ and $n \in \Z$. Then $\scl$ vanishes on $H$ and descends to a pseudo-norm on
$B_1(G)/H(G)$. For general $G$ this pseudo-norm is not a true norm, but in many cases
of interest (e.g.\/ for fundamental groups of hyperbolic manifolds), $\scl$ is a genuine norm on
$B_1(G)/H(G)$. See \cite{Calegari_scl}, \S~2.6 for proofs
of these basic facts. We usually denote this quotient space by $B_1^H(G)$ or just $B_1^H$ if $G$ is
understood.

\section{Free products of Abelian groups}\label{free_product_section}

The purpose of this section is to prove that $\scl$ is piecewise rational linear in
$B_1^H(G)$, where $G$ is a free product of Abelian groups. Along the way we develop some
additional structure which is important for what follows.

\subsection{Euler characteristic with corners}

We will obtain surfaces by gluing up simpler surfaces along segments in their
boundary. Since ordinary Euler characteristic is not additive under such gluing,
we consider surfaces with {\em corners}. Technically, a corner should be
thought of as an orbifold point with angle $\pi/2$ (in contrast to a ``smooth''
boundary point where the angle is $\pi$).

When two surfaces with boundary are glued along a pair of segments in their
boundary, the interior points of the segments should be smooth, and the endpoints
should be corners. In the glued up surface, the boundary points which result
from identifying two corners should be smooth.

If $S$ is a surface as above, let $c(S)$ denote the number of corners. Define
the {\em orbifold Euler characteristic} of $S$, denoted $\chi_o(S)$, by the formula
$$\chi_o(S) = \chi(S) - \frac {c(S)} 4$$

With this convention, $\chi_o$ is additive under gluing, and
$\chi_o = \chi$ for a surface with no corners.
In the sequel we will only consider surfaces with an even number of corners, 
so $\chi_o$ will always be in $\frac 1 2 \Z$.

\subsection{Decomposing surfaces}\label{constructing_extremal_surfaces_subsection}

Throughout the remainder of this section, we fix a group $G=A*B$ where 
$A$ and $B$ are free Abelian groups, and a finite set
$Z$ of (nontrivial) conjugacy classes in $G$. We are interested in the restriction of $\scl$
to the space $\langle Z \rangle \cap B_1^H(G)$ of homologically trivial chains with support in $Z$. 

Let $K_A$ and $K_B$
be a $K(A,1)$ and a $K(B,1)$ respectively (for instance, we could take $K(\cdot,A)$ to be a torus
of dimension equal to the rank of $A$, and similarly for $B$) and let $K = K_A \vee K_B$ be a $K(G,1)$.
We denote the base point of $K$ by $* = K_A \cap K_B$.

A homotopically essential map $\gamma:S^1 \to K$ is {\em tight} if it has one of the following
two forms:
\begin{enumerate}
\item{the image of $\gamma$ is a loop contained entirely in $K_A$ or in $K_B$ (we call such maps
{\em Abelian loops}); or}
\item{the circle $S^1$ can be decomposed into intervals, each of which is taken alternately
to an essential based loop in one of $K_A,K_B$.}
\end{enumerate}
Every free homotopy class of map to $K$ has a tight representative. 

A (nonabelian) tight loop $\gamma:S^1 \to K$ induces a polygonal structure on $S^1$, with one edge
for each component of the preimage of $K_A$ or $K_B$ and vertex set $\gamma^{-1}(*)$.
By convention, we also introduce a polygonal structure on $S^1$ when $\gamma$ is an Abelian
loop, with one (arbitrary) vertex and one edge.

For each element of $Z$, choose a tight loop in the correct conjugacy class.
The union of these tight loops can be thought of as an oriented $1$-manifold $L$
(with one component for each element of $Z$) together with a map $\Gamma:L \to K$.
As above, $\Gamma$ induces a polygonal structure on $L$. Each oriented edge in this polygonal
structure is mapped either to $K_A$ or to $K_B$. Let $T(A)$ denote the set of
{\em $A$-edges} and $T(B)$ the set of {\em $B$-edges}. Note that each $A$ or $B$ edge
can be thought of as a homotopy class of loop in $K_A$ or $K_B$, and therefore determines
an element of the (fundamental) group $A$ or $B$.

\medskip

Let $f:S \to X$ be an admissible surface. After a homotopy, we assume (in the notation
of Proposition~\ref{geometric_formula_for_scl}) that 
$\partial f:\partial S \to L$ is a
covering map, and that $f$ is transverse to $*$ (i.e.\/ $f^{-1}(*)$ is a system of proper
arcs and loops). Furthermore, we assume 
(by Proposition~\ref{admissible_maps_oriented}) that $\partial f:\partial S \to L$ is
{\em orientation-preserving}.

Denote by $F$ the preimage $f^{-1}(*)$ in $S$; by hypothesis, $F$ is a system of proper arcs and loops in $S$.
In anticipation of what is to come, we refer to the components of $F$ as {\em $\sigma$-edges}.
Since $f$ maps $F$ to $*$, loops of $F$ can be eliminated by compression (innermost first).
Since $f$ restricted to $\partial S$ is a covering map, every arc of $F$ is essential. Thus without
loss of generality we assume $F$ is a system of essential proper arcs in $S$. Cut $S$ along
$F$ and take the path closure to obtain two surfaces $S_A$ and $S_B$, which
are the preimages under $f$ of $K_A$ and $K_B$ respectively, 
and satisfy $S_A \cap S_B = F$.

Each component of $\partial S_A$ either maps entirely to $K_A$ (those which cover Abelian loops)
or decomposes into arcs which alternate between components of $F$ and arcs
which map to elements of $T(A)$; we refer to the second kind of arcs as {\em $\tau$-edges}. 
In order to treat everything uniformly, we blow up
the vertices on Abelian loops into intervals, which we refer to as {\em dummy} 
$\sigma$-edges. A $\sigma$-edge which is not a dummy edge is {\em genuine}.
Thus $\partial S_A$ can be thought of as a union of
polygonal circles, whose edges alternate between $\sigma$-edges and $\tau$-edges.

The surface $S_A$ and $S_B$ naturally have the structure of surfaces with corners precisely
at points of $F \cap \partial S$. In particular,
$$\chi_o(S_A) = \chi(S_A) - \frac 1 2 \text{ number of components of } F$$
The number of components of $F$ is equal to the number of genuine $\sigma$-edges.
on $S_A$ (which is therefore equal to the number of genuine $\sigma$-edges on $S_B$).
Since $S$ has no corners,
$$\chi(S) = \chi_o(S) = \chi_o(S_A) + \chi_o(S_B) = \chi(S_A) + \chi(S_B) - \text{number of components of } F$$

\subsection{Encoding surfaces as vectors}

We would like to reduce the computation of $\scl$ to a finite dimensional linear programming
problem. The main difficulty is that it is difficult to find a useful parameterization of the
set of all admissible surfaces. However, in the end, all we need to know about an admissible surface
is $-\chi^-$ and degree.

We need to keep track of two different kinds of information: the number and kind of $\tau$-edges which appear
in $\partial S_A$, and the number and kind of $\sigma$-edges which appear. Each
oriented $\sigma$ edge runs from the end of one oriented $\tau$-edge to the start of
another oriented $\tau$-edge, and can therefore be encoded as an ordered pair of $\tau$-edges; 
i.e. as an element of $T(A) \times T(A)$. 
However, not {\em every} element of $T(A) \times T(A)$ can arise in this way:
the only $\sigma$-edges associated to Abelian loops are the ``dummy'' $\sigma$-edges.
Consequently we let $T_2(A)$ denote the set
of ordered pairs $(\tau,\tau')$ with $\tau,\tau' \in T(A)$ subject to the constraint
that if either of $\tau,\tau'$ is an Abelian loop, then $\tau = \tau'$.

Let $C_1(A)$ denote the $\R$-vector space with basis $T(A)$, and $C_2(A)$ the $\R$-vector
space with basis $T_2(A)$.
The oriented surface $S_A$ determines a set of oriented $\sigma$-edges and therefore
a non-negative integral vector $v(S_A) \in C_2(A)$. This vector is not arbitrary
however, but is subject to two further linear constraints which we now describe.

Define a linear map $\partial: C_2(A) \to C_1(A)$ on basis vectors by $\partial(\tau,\tau') = \tau - \tau'$,
and extend by linearity. Since each $\tau$-edge is contained between exactly two $\sigma$ edges, 
$\partial \circ v(S_A) = 0$.
Similarly define $h:C_2(A) \to A \otimes \R$ by $h(\tau,\tau') = \frac 1 2 (\tau + \tau')$ and
extend by linearity. By definition, $h\circ v(S_A)$ is equal to the image of $[\partial S_A]$ in $H_1(K_A) = A$. 
Since $\partial S_A$ is a boundary, $h \circ v(S_A) = 0$.

\begin{definition}
Let $V_A$ be the convex rational cone of non-negative vectors
$v$ in $C_2(A)$ satisfying $\partial(v) = 0$ and $h(v)=0$.
\end{definition}

Note that $V_A$ is the cone on a compact convex rational polyhedron. A surface $S_A$ as above
determines an integral vector $v(S_A) \in V$. 
Conversely we will see that for every non-negative integral vector $v \in V_A$ there are many possible
surfaces $S_A$ with $v(S_A) = v$. For such a $S_A$, the number of genuine $\sigma$-edges depends only
on the vector $v$. However, it is important to be able to choose such a surface $S_A$
with $\chi(S_A)$ as big as possible. Finding such an $S_A$ is an interesting combinatorial problem, which
we now address.

\begin{definition}
A {\em weighted directed graph} is a directed graph $\Sigma$ together with an assignment
of a non-negative integer to each edge of $\Sigma$. The {\em support} of a weighted directed
graph is the subgraph of $\Sigma$ consisting of edges with positive weights, together with
their vertices.
\end{definition}

Let $v \in V_A$ be integral. Define a weighted directed graph $X(v)$ as follows. First let $\Sigma$
denote the directed graph with vertex set $T(A)$ and edge set $T_2(A)$. Edges of $\Sigma$ 
correspond to basis vectors of $C_2(A)$. Give each edge a weight equal to the coefficient of $v$ when
expressed in terms of the natural basis. 

\begin{definition}
Given a graph $X(v)$ as above, let $\supp(v)$ denote the support of $X(v)$
(so that $\supp(v)$ is a subgraph of $\Sigma$), and let $|X(v)|$ denote the number of components
of $\supp(v)$.
\end{definition}

\begin{lemma}
Let $v \in V$ be a non-negative integral vector. Then there is a planar surface $S_A$ with $v(S_A) = v$ 
and with $|X(v)|$ boundary components. Moreover for any surface $S_A$ with $v(S_A)=v$, the number of boundary
components of $S_A$ is at least $|X(v)|$.
\end{lemma}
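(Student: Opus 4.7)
The plan is to recognize the linear condition $\partial v = 0$ defining $V_A$ as the Eulerian flow condition on the weighted directed multigraph $X(v)$, and then invoke the classical Eulerian circuit theorem to cut $\supp(v)$ into closed walks that can be filled by disks.

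First I would unpack definitions. View $\supp(v)$ as a directed multigraph with vertex set $T(A)$ and with $v_{(\tau,\tau')}$ parallel directed edges from $\tau$ to $\tau'$. The map $\partial \colon C_2(A) \to C_1(A)$ sends $(\tau,\tau')$ to $\tau - \tau'$, so $\partial(v) = 0$ says that at every vertex $\tau \in T(A)$, the total weight of incoming edges equals the total weight of outgoing edges. This is precisely the balanced-degree condition required for a directed Eulerian circuit.

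For the upper bound, I would apply the directed Eulerian theorem to each connected component $C$ of $\supp(v)$: since $C$ is connected and balanced, it admits an Eulerian circuit $W_C$ traversing each edge with its assigned multiplicity. The circuit $W_C$ reads out as a cyclic word alternating between vertices of $C$ (which are $\tau$-edges) and edges of $C$ (which are $\sigma$-edges). Take a polygonal disk $D_C$ whose boundary is labelled by this cyclic word, with the $\tau$- and $\sigma$-labels as alternating sides and the transitions between them as corners. Set $S_A = \coprod_C D_C$, where $C$ ranges over connected components of $\supp(v)$. Then $S_A$ is planar (each component is a disk), has exactly $|X(v)|$ boundary components, and by construction every basis vector $(\tau,\tau') \in T_2(A)$ appears with multiplicity $v_{(\tau,\tau')}$, so $v(S_A) = v$.

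For the lower bound, suppose $S_A'$ is any surface with $v(S_A') = v$. Each boundary component of $S_A'$ reads out as a closed walk in $\supp(v)$, and the collection of these walks together uses each edge $(\tau,\tau')$ exactly $v_{(\tau,\tau')}$ times. But a single closed walk is necessarily contained in one connected component of $\supp(v)$. Hence the collection of boundary walks partitions into nonempty subcollections, one for each connected component of $\supp(v)$, forcing the total number of boundary components to be at least $|X(v)|$.

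The main obstacle is essentially just correctly interpreting $\supp(v)$ as a directed Eulerian multigraph; once that is in place, everything reduces to classical combinatorics. A minor bookkeeping point is the treatment of Abelian loops, where the corresponding $\sigma$-edge has type $(\tau,\tau)$ and becomes a loop at the vertex $\tau$ in $\Sigma$; loops contribute $+1$ to both in-degree and out-degree and are handled by the standard Eulerian construction without modification.
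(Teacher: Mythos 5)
Your lower-bound argument is exactly the paper's, and it is correct. The gap is in the construction for the upper bound.

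You build $S_A$ as a disjoint union of polygonal disks $D_C$, one per component of $\supp(v)$, with the boundary of $D_C$ labelled by the Eulerian circuit $W_C$. But $S_A$ must come with a map to $K_A$ sending $\tau$-edges to the prescribed loops and $\sigma$-edges to the basepoint $*$, and for a disk $D_C$ such a map exists only if its boundary circle is nullhomotopic in $K_A$, i.e.\ only if $h(v|_C)=0$ in $A$. The hypothesis $h(v)=0$ guarantees only that the \emph{sum} over components vanishes; it does not guarantee $h(v|_C)=0$ for each component separately, and indeed this fails in general (already for two components carrying opposite classes in $A$). Your construction therefore does not produce a surface with the required map, and, tellingly, your proof never invokes the condition $h(v)=0$ at all, which is one of the two defining constraints of $V_A$ and must enter somewhere.

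The paper's proof fixes exactly this: it uses the Eulerian circuits only to build a $1$-manifold $D$ with $|X(v)|$ circles mapping to $K_A$, observes that the \emph{total} image of $D$ in $H_1(K_A)=A$ is $h(v)=0$, and therefore $D$ bounds a single map of a surface $S_A'$ to $K_A$ --- with genus allowed, and with components that may span several circles of $D$. It then compresses embedded once-punctured tori (using that $A$ is Abelian, so such boundary curves are nullhomotopic in $K_A$) to drive the genus to zero, yielding a planar $S_A$ with the same $|X(v)|$ boundary circles. Note that the resulting planar surface need not be a disjoint union of disks; genus zero is all that can be guaranteed and all that the lemma claims. To repair your proof you would need to replace the disk-by-disk filling with the homological bounding plus genus-compression argument.
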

\begin{proof}
We construct a component of $\partial S_A$ for each component of $X(v)$. Since $\partial(v)=0$,
the indegree (i.e. the sum of the weights on the incoming edges) and the outdegree (i.e.
the sum of the weights on the outgoing edges) at each vertex of $X(v)$ are equal. The same
is true for each connected component of $X(v)$. A connected directed weighted graph with equal
indegree and outdegree at each vertex admits an {\em Eulerian circuit}; i.e. a directed
circuit which passes over each edge a number of times equal to its weight. This fact is
classical; see e.g.\/ \cite{Bollobas}, \S~I.3. The vertices visited
in such a circuit (in order) determine a sequence of elements of $T(A)$. Together with one
$\sigma$ edge (mapping to $*$) between each pair in the sequence, we construct a circle
and a map to $K_A$. If we do this for each component of $X(v)$, we obtain a $1$-manifold $D$
and a map to $K_A$. The image of $D$ in $H_1(K_A)=A$ is equal to $h(v) = 0$, so $D$ bounds a
map of a surface $S_A'$ to $K_A$. Since $A$ is Abelian, every embedded once-punctured torus
in $S_A'$ has boundary which maps to a homotopically trivial loop in $K_A$, and can therefore be
compressed. After finitely many such compressions, we obtain a planar surface $S_A$ as claimed.

Conversely, if $S_A$ is a surface with $v(S_A)=v$ then every boundary component determines an Eulerian
circuit in $X(v)$ in such a way that the sum of the degrees of these circuits is equal to the weight.
In particular, each component of $X(v)$ is in the image of at least one boundary component, and
the Lemma is proved.
\end{proof}

A vector $v$ in $C_2(A)$ can be thought of as a finite linear combination of elements of
$T_2(A)$. Define $|v|$ to be the sum of the coefficients of $v$ {\em excluding the coefficients
corresponding to Abelian loops}. Hence for $v=|v(S_A)|$, the number $|v|$ is just the number of
genuine $\sigma$-edges in $\partial S_A$. We conclude that
$$\chi_o(S_A) = \chi(S_A) - \frac 1 2 |v|$$
In order to determine $\scl$ we would like to construct surfaces $S$ with a given $v(S)$
with $\chi(S)$ as large as possible. The first easy, but key observation is the following:

\begin{lemma}\label{negative_is_zero}
For any $v\in V_A$ and any positive integer $n$, the graphs $X(v)$ and $X(nv)$ 
have the same number of components.
\end{lemma}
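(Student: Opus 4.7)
The plan is to observe that this lemma is essentially a tautology once one unpacks the definitions carefully. The graph $X(v)$ has a fixed underlying directed graph $\Sigma$ (with vertex set $T(A)$ and edge set $T_2(A)$); the vector $v$ merely assigns weights to its edges. The only way $v$ enters the definition of $\supp(v)$ is through which weights are strictly positive: $\supp(v)$ is the subgraph of $\Sigma$ consisting of edges with positive weight, together with their endpoints.

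First I would note that since $v \in V_A$ is assumed (implicitly, for $X(v)$ to be defined) to be a non-negative integral vector, and $n$ is a positive integer, the scaled vector $nv$ is also a non-negative integral vector in $V_A$, so $X(nv)$ is defined. Then I would verify the key point: for each basis element $(\tau,\tau') \in T_2(A)$, the coefficient of $(\tau,\tau')$ in $nv$ equals $n$ times the coefficient of $(\tau,\tau')$ in $v$, and since $n > 0$, this is strictly positive if and only if the coefficient in $v$ is strictly positive. Hence the edge sets of $\supp(v)$ and $\supp(nv)$ agree, and therefore so do their vertex sets.

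Since $\supp(v) = \supp(nv)$ as subgraphs of $\Sigma$, they have the same number of connected components, giving $|X(v)| = |X(nv)|$. There is no real obstacle here; the content is entirely in recognizing that the number of components is a combinatorial invariant that depends only on the support (not on the magnitude) of the weights, which is why this lemma is the natural bridge to linearizing the Euler characteristic calculations in the subsequent arguments.
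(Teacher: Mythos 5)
Your proof is correct and takes exactly the same approach as the paper, which disposes of the lemma in one line ("The graphs are the same, but the weights are scaled by $n$."); you have simply unpacked the observation that scaling by a positive integer preserves strict positivity of each coordinate, hence preserves the support and its component count.
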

\begin{proof}
The graphs are the same, but the weights are scaled by $n$.
\end{proof}

It follows that for any $v$ and any $\epsilon > 0$
one can find a surface $S_A$ with $v(S_A) = nv$ and $|\chi(S_A)|/n < \epsilon$. Hence we may
take $\chi_o(S_A)$ to be projectively as close to $-\frac 1 2|v|$ as we like. As far as surfaces
with $\chi(S_A)\le 0$ are concerned, this is the end of the story. However it is very important
to control the complexity of surfaces $S_A$ with $v(S_A)=v$ which contain disk components, and it
is this which we focus on in the next section. 

\subsection{Disk vectors and sails}

\begin{definition}
A non-negative nonzero integral vector $v \in V_A$ with $\supp(v)$ connected is called a
{\em disk vector}.
\end{definition}

Notice that a disk vector $v$ can contain no Abelian loops. That is, if $e$ in $T_2(A)$
is of the form $e = (\tau,\tau)$ where $\tau \in T(A)$ is an Abelian loop, then
the coefficient of $e$ in the vector $v$ is necessarily zero. For, the hypothesis that $v$ is a disk vector
implies that $e$ is the {\em only} nonzero coefficient in $v$. But this implies that
$h(v)$ is a nonzero multiple of $h(\tau) \in A$. Since $A$ is free and $h(\tau)$ is nonzero,
$h(v)$ is nonzero, contrary to the hypothesis that $v \in V_A$. This proves the claim.

In particular, for $v$ a disk vector, $|v|$ is the ordinary $L^1$ norm of
the vector $v$, and is therefore a good measure of its complexity.

\begin{definition}\label{admissible_expression}
Let $v$ be a (not necessarily integral) vector in $V_A$. An expression of the form
$$v = \sum t_i v_i + v'$$
is {\em admissible} if each $v_i$ is a disk vector (and, in particular, is integral), 
each $t_i$ is positive, and $v' \in V_A$.
\end{definition}

We are now in a position to define a suitable function $\chi_o$ on $V_A$.

\begin{definition}\label{klein_function}
Define $\chi_o$ on $V_A$ by
$$\chi_o(v) = \sup \sum_i t_i - \frac 1 2 |v|$$
where the supremum is taken over all admissible expressions $v = \sum t_i v_i + v'$;
i.e. expressions where $v' \in V_A$, the $t_i > 0$, and each $v_i$ is a disk vector.
\end{definition}

\begin{lemma}
For any surface $S_A$, there is an inequality $\chi_o(v(S_A)) \ge \chi_o(S_A)$. Conversely,
for any rational vector $v \in V_A$ and any $\epsilon > 0$ there is an integer $n$ and
a surface $S_A$ with $v(S_A) = nv$ such that $|\chi_o(S_A)/n - \chi_o(v)| \le \epsilon$.
\end{lemma}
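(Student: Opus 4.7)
The plan is to prove each direction by translating between connected components of surfaces and summands of admissible expressions.

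For the forward inequality $\chi_o(v(S_A)) \ge \chi_o(S_A)$, decompose $S_A$ into its connected components and partition these into \emph{disk components} $D_1,\dots,D_k$ (each homeomorphic to $D^2$, with $\chi(D_i)=1$) and \emph{non-disk components} $N_1,\dots,N_\ell$ (each with $\chi(N_j)\le 0$). Any connected subsurface $C\subset S_A$ has $v(C)\in V_A$: the edge-balance $\partial v(C)=0$ holds because each $\tau$-edge on $\partial C$ sits between two $\sigma$-edges, and $h(v(C))=0$ because $\partial C$ is the boundary of $C\to K_A$ and hence null-homologous in $A$. Since each disk has a single boundary circle, $\supp(v(D_i))$ is connected, so $v(D_i)$ is a disk vector. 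Setting $v':=\sum_j v(N_j)\in V_A$, the expression $v(S_A)=\sum_i 1\cdot v(D_i)+v'$ is admissible, and since $\chi(S_A)=k+\sum_j\chi(N_j)\le k$,
$$\chi_o(S_A)\;=\;\chi(S_A)-\tfrac{1}{2}|v(S_A)|\;\le\;\sum_i 1-\tfrac{1}{2}|v(S_A)|\;\le\;\chi_o(v(S_A)).$$

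For the converse, given a rational $v\in V_A$ and $\epsilon>0$, I would choose an admissible expression $v=\sum_i t_i v_i + v'$ with rational $t_i$ satisfying $\sum_i t_i-\tfrac{1}{2}|v|\ge\chi_o(v)-\tfrac{\epsilon}{2}$. The bound $\sum_i t_i|v_i|\le|v|$ together with $|v_i|\ge 1$ for every disk vector confines the non-negligible $t_i$ to a finite list of candidate disk vectors, reducing the supremum to a finite-dimensional rational linear program whose near-optima are attained at rational points. Pick $N$ large enough that every $Nt_i$ is a positive integer and, by enlarging $N$ further using Lemma \ref{negative_is_zero}, such that the planar surface $S'$ produced from the integral vector $Nv'$ satisfies $|\chi(S')|/N<\tfrac{\epsilon}{2}$ (this is possible because $|X(Nv')|=|X(v')|$ is independent of $N$, bounding $\chi(S')$ by a constant). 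For each disk vector $v_i$, the previous lemma produces a planar surface $D_i$ with $v(D_i)=v_i$ and $|X(v_i)|=1$ boundary component, i.e.\ an honest disk with $\chi(D_i)=1$. Assemble $S_A:=S'\sqcup\bigsqcup_i(Nt_i\text{ copies of }D_i)$; then $v(S_A)=Nv$ and
$$\frac{\chi_o(S_A)}{N}\;=\;\sum_i t_i+\frac{\chi(S')}{N}-\tfrac{1}{2}|v|\;\ge\;\chi_o(v)-\epsilon.$$
The forward inequality applied to $S_A$, together with the homogeneity $\chi_o(Nv)=N\chi_o(v)$ (immediate by rescaling admissible expressions), supplies the matching upper bound $\chi_o(S_A)/N\le\chi_o(v)$, giving $|\chi_o(S_A)/N-\chi_o(v)|\le\epsilon$.

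The main obstacle is the semi-infinite nature of the supremum defining $\chi_o$: one must verify that it can be approximated by admissible expressions with rational $t_i$ supported on a finite list of disk vectors, so that clearing denominators becomes possible. Once this reduction is in hand, the rest is bookkeeping — the $\sum t_iv_i$ part is realized exactly by disjoint disks, while Lemma \ref{negative_is_zero} ensures the remainder surface for $v'$ contributes vanishing Euler characteristic per unit of $N$.
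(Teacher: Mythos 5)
Your proof is correct and follows essentially the same route as the paper: in the forward direction, decompose $S_A$ into disk and non-disk components to produce an admissible expression and then bound $\chi(S_A)$ from above by the number of disks; in the converse direction, approximate the supremum defining $\chi_o(v)$ by a rational admissible expression, clear denominators, realize the disk vectors by honest disks and the remainder $Nv'$ by a planar surface whose $\chi$ is a constant independent of $N$ via Lemma~\ref{negative_is_zero}. You are slightly more careful than the paper at two spots — you justify why the supremum can be approximated through finitely many disk vectors with rational weights (via the bound $\sum t_i|v_i|\le|v|$, $|v_i|\ge 1$), and you explicitly close the estimate by invoking the forward inequality plus homogeneity of $\chi_o$ to get the matching upper bound — but these are refinements of the paper's argument, not a different proof.
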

\begin{proof}
Let $S_A$ be a surface, with disk components $D_1,\cdots,D_m$ and $S_A' = S_A - \cup_i D_i$.
Corresponding to this there is an admissible expression
$$v(S_A) = \sum v(D_i) + v(S_A')$$
Now, $\chi_o(S_A') = \chi(S_A')-|v(S_A')|/2$, and since $S_A'$ contains no disk components,
$\chi(S_A')\le 0$. Moreover, $\chi_o(\cup_i D_i) = \sum_i (1-|v(D_i)|/2)$. Hence
$$\chi_o(S_A) = \sum_i(1-|v(D_i)|/2) + \chi(S_A') - |v(S_A')|/2 \le \sum_i 1 - |v(S_A)|/2 \le \chi_o(v(S_A))$$
proving the first claim.

The idea behind the proof of the second claim is as follows. 
Since $\chi_o(S_A) = \chi(S_A) - |v|/2$, to maximize $\chi_o(S_A)$ for a given $v(S_A)$ 
is to maximize $\chi(S_A)$. Since components with $\chi\le 0$ can be projectively replaced 
by components with $\chi$ as close to $0$ as desired (by Lemma~\ref{negative_is_zero}), the
goal is to (projectively) maximize the number of disks used. In more detail:
if $v$ is rational, and $v = \sum t_i v_i + v'$ is an admissible expression, we
can find another admissible expression $v = \sum t_i' v_i + v''$ where the $t_i'$ and $v''$
are rational, and $\sum |t_i - t_i'| \le \epsilon/2$ for any fixed positive $\epsilon$. After multiplying
through by a big integer $n$ to clear denominators, we can find a surface $S_A$ with
$v(S_A) = nv$, with $\sum_i nt_i'$ disk components. Now, it may be that the non disk components
have $\chi$ negative, but by Lemma~\ref{negative_is_zero} we can projectively replace this
part of the surface by a planar surface with $\chi$ very close to $0$. Hence
after possibly replacing $n$ by a much bigger integer, we can find $S_A$ with $v(S_A)=nv$
such that $|\chi_o(S_A)/n - (\sum t_i' - \frac 1 2 |v|)| \le \epsilon/2$.
This completes the proof.
\end{proof}

It remains to study the function $\chi_o$.
Equivalently, we study 
$$\kappa(v) = \sup \sum_i t_i = \chi_o(v) + |v|/2$$ 
which we call the {\em Klein function} of $V_A$. Let $\D_A \subset V_A$ denote the set of disk vectors,
and let $\D_A + V_A$ denote {\em Minkowski sum} of $\D_A$ and $V_A$; i.e.
the set of vectors of the form $d+v$ for $d \in \D_A$ and $v \in V_A$. 
Let $\conv(\cdot)$ be the function which assigns to a subset of
a linear space its convex hull. Taking convex hulls commutes with Minkowski sum.
Note that since $V_A$ is convex, $\conv(\D_A + V_A) = \conv(\D_A) + V_A$.

\begin{lemma}\label{klein_is_boundary}
The Klein function $\kappa$ is a non-negative concave linear function on $V_A$.
The subset of $V_A$ on which $\kappa=1$ is the boundary of $\conv(\D_A+V_A)$.
\end{lemma}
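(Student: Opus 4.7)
My plan has two stages: first, verifying the elementary properties of $\kappa$ (nonnegativity, positive homogeneity, concavity) directly from the supremum formula; second, identifying the unit level set with the boundary of $\conv(\D_A+V_A)$ via a rescaling argument together with a closedness fact.

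For the elementary properties I would argue directly by manipulating admissible expressions. Nonnegativity is immediate from the ``trivial'' admissible expression $v=v$ with empty disk sum, contributing $\sum t_i = 0$. Positive homogeneity follows because $v = \sum t_i v_i + v'$ is admissible if and only if $\lambda v = \sum (\lambda t_i) v_i + \lambda v'$ is, for any $\lambda > 0$; hence $\kappa(\lambda v) = \lambda \kappa(v)$. Superadditivity comes from concatenating admissible expressions for $v_1$ and $v_2$ and summing their residuals within the convex cone $V_A$, giving an admissible expression for $v_1+v_2$ whose disk coefficients total the sum of the two individual sums. Combined with positive homogeneity, this yields concavity.

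For the geometric identification, I would first show the superlevel equality $\{v \in V_A : \kappa(v) \ge 1\} = \conv(\D_A+V_A)$, noting that $\conv(\D_A+V_A) = \conv(\D_A)+V_A$ since $V_A$ is convex. The inclusion $\supseteq$ is direct: any $v = \sum t_i v_i + v'$ with $v_i \in \D_A$, $t_i \ge 0$ summing to $1$, and $v' \in V_A$ becomes (after dropping zero coefficients) an admissible expression with $\sum t_i = 1$, witnessing $\kappa(v) \ge 1$. For $\subseteq$, given an admissible expression for $v$ with $s := \sum t_i > 1$, I would rescale by setting $\tilde t_i := t_i / s$ (which sum to $1$) and $\tilde v' := v - \sum_i \tilde t_i v_i = v' + \tfrac{s-1}{s}\sum_i t_i v_i$; this $\tilde v'$ lies in $V_A$ as a nonnegative combination of cone vectors, placing $v \in \conv(\D_A)+V_A$. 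The marginal case $\kappa(v) = 1$ is handled by applying the argument to $(1+\epsilon)v$ and letting $\epsilon \to 0$, using closedness of $\conv(\D_A+V_A)$. The boundary identification then follows from positive homogeneity: along each ray $\{tv : t > 0\} \subset V_A$ with $\kappa(v) > 0$, the function $t \mapsto \kappa(tv) = t\,\kappa(v)$ is linear, so the ray meets $\{\kappa \ge 1\}$ in the closed half-ray $\{tv : t \ge 1/\kappa(v)\}$ whose $V_A$-relative endpoint $v/\kappa(v)$ is the unique point on the ray where $\kappa = 1$; assembling these endpoints over all rays and noting that points with $\kappa > 1$ lie in the relative interior of the convex set $\conv(\D_A+V_A)$ (by continuity of the concave function $\kappa$), one obtains $\{\kappa=1\} = \partial \conv(\D_A+V_A)$ in $V_A$.

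The main technical hurdle I anticipate is the closedness of $\conv(\D_A+V_A)$, used to handle the marginal case $\kappa(v) = 1$ in the rescaling argument. Since $\D_A$ is infinite, closedness is not automatic; however, the rational polyhedral structure of $V_A$ should yield $\conv(\D_A+V_A) = \conv(F) + V_A$ for a suitable finite subset $F \subset \D_A$ (for instance derived from the finite Hilbert basis of the integer monoid $V_A \cap (C_2(A))_\Z$), making it a Minkowski sum of a compact polytope with a closed convex cone, and hence closed.
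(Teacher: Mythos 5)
Your proof is correct and follows essentially the same route as the paper's: non-negativity from the empty admissible expression, concavity via superadditivity of admissible expressions together with positive homogeneity, and identification of the superlevel set $\{\kappa\ge 1\}$ with $\conv(\D_A)+V_A=\conv(\D_A+V_A)$ by the rescaling $v\mapsto v/\sum t_i$. You go slightly beyond the paper in flagging the closedness of $\conv(\D_A+V_A)$ needed for the marginal case $\kappa(v)=1$; the paper's terse proof relies on this tacitly, and it is supplied by the immediately following Lemma~\ref{boundary_is_finite_polyhedron} via precisely the Gordan/Hilbert-basis argument you anticipate. One small soft spot: invoking ``continuity of the concave function $\kappa$'' to place $\{\kappa>1\}$ in the relative interior is not automatic, since a concave function on a polyhedral cone need not extend continuously to the cone's boundary faces. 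A cleaner finish, once the superlevel-set identity and closedness are in hand, is to note that for $\kappa(v)>1$ the point $v/\kappa(v)$ has $\kappa=1$ and is the unique point of the ray through $v$ lying on the face of $\conv(\D_A+V_A)$ that separates it from the origin in $V_A$ (the sail); $v$ itself lies strictly beyond it along a direction in $V_A$, hence off the sail, giving $\{\kappa=1\}=\partial\conv(\D_A+V_A)$ in the sense intended.
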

\begin{proof}
If $v_1,v_2$ are elements of $V_A$, the sum of admissible expressions for $v_1$ and $v_2$
is an admissible expression for their sum. This proves concavity. Non-negativity is obvious from the
definition. To prove the last assertion, note that an admissible expression
$v = \sum t_i v_i + v'$ exhibits $v/(\sum t_i)$ as an element of $\conv(\D_A) + V_A = \conv(\D_A + V_A)$.
\end{proof}

The following lemma, while elementary, is crucial.

\begin{lemma}\label{boundary_is_finite_polyhedron}
The sets $\conv(\D_A)$ and $\conv(\D_A + V_A)$ are finite sided convex closed polyhedra, 
whose vertices are elements of $\D_A$.
\end{lemma}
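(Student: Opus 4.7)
The plan is to prove polyhedrality by establishing finiteness of the vertex set, using Gordan's lemma applied to the rational polyhedral cone $V_A$ together with a case analysis organized by the finitely many connected subgraphs of $\Sigma$.

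First, $V_A$ is itself a rational polyhedral cone, cut out of the non-negative orthant in $C_2(A)$ by the two rational linear equations $\partial v = 0$ and $h(v) = 0$. Any vertex of $\conv(\D_A + V_A)$ must lie in $\D_A$: if $v = d + u$ with $d \in \D_A$ and $u \in V_A \setminus \{0\}$, then $v = \tfrac{1}{2}(d) + \tfrac{1}{2}(d + 2u)$ writes $v$ as the midpoint of two distinct points of $\D_A + V_A$, contradicting extremality. I call $v \in \D_A$ \emph{irreducible} if it admits no decomposition $v = d + u$ with $d \in \D_A$ and $u \in V_A \setminus \{0\}$; then the vertex set of $\conv(\D_A + V_A)$ equals the set $\D_A^\star$ of irreducible disk vectors.

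The main step is to show $\D_A^\star$ is finite. Since $\Sigma$ has only finitely many connected subgraphs, it suffices to bound $|v|$ over irreducible $v \in \D_A^\star$ of fixed support $G$. The face $V_A^G := \{w \in V_A : \supp(w) \subseteq G\}$ is a rational polyhedral subcone, so by Gordan's lemma the monoid $V_A^G \cap \Z^{T_2(A)}$ has a finite Hilbert basis $B(G)$. Write $v = \sum_i c_i b_i$ with $b_i \in B(G)$ and $c_i \in \Z_{\geq 0}$. If some $b_{i_0}$ with $c_{i_0} \geq 1$ already has connected support (so $b_{i_0} \in \D_A$), then $v = b_{i_0} + (v - b_{i_0})$ is a decomposition of the forbidden form unless $v = b_{i_0}$, so irreducibility forces $v \in B(G)$. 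Otherwise every $b_i$ appearing has disconnected support, but $\bigcup_{i:\, c_i \geq 1} \supp(b_i) = G$ is connected, so one can select a minimal subset $I \subseteq \{i : c_i \geq 1\}$ for which $\bigcup_{i \in I} \supp(b_i)$ is connected. Then $d := \sum_{i \in I} b_i$ is a disk vector with $d \leq v$ componentwise (since $c_i \geq 1$ for each $i \in I$) and $v - d \in V_A \cap \Z^{T_2(A)}$; irreducibility forces $v = d$, so $v$ is a sum of at most $|B(G)|$ elements of $B(G)$. In either case $|v|$ is bounded in terms of the finite collection $\bigcup_G B(G)$, so $\D_A^\star$ is finite.

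Once $\D_A^\star$ is finite, $\conv(\D_A + V_A) = \conv(\D_A^\star) + V_A$ is the Minkowski sum of a polytope and a rational polyhedral cone, hence a finite-sided rational polyhedron by Minkowski--Weyl, with vertices in $\D_A$ by construction. For $\conv(\D_A)$, the same set $\D_A^\star$ governs the vertex structure, while the recession cone is itself a rational polyhedral subcone of $V_A$: for any $d \in \D_A$ and any integral $u \in V_A$ with $\supp(u)$ meeting $\supp(d)$, the vectors $d + nu$ all lie in $\D_A$, placing $u$ in the recession cone, and a support-based analysis shows this cone is finitely generated. The main obstacle is the ``assembly'' step in the second case above: one must verify that a \emph{minimal} subcollection $I$ of disconnected-support Hilbert basis elements does combine into a connected subgraph while remaining componentwise below $v$, and that no further pathological configuration of supports prevents the decomposition from closing up. The minimality of $I$ (with respect to inclusion) together with the constraint $c_i \geq 1$ for $i \in I$ is precisely what makes this step go through.
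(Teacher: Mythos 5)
Your argument takes a genuinely different route from the paper. The paper decomposes $\D_A$ by open faces $F$ of $V_A$ with connected support, and cites the classical fact (Gordan, Barvinok) that the lattice points in the \emph{interior} of a rational polyhedral cone form a finitely generated module over the Hilbert-basis semigroup of its closure, from which it reads off that each Klein polyhedron $\conv(F\cap\D_A)$ is finite-sided. You instead show directly that the set $\D_A^\star$ of irreducible disk vectors is finite, using only Gordan's lemma for the closed subcones $V_A^G$ plus a short combinatorial step. This is more hands-on and avoids the ``module'' half of the classical input, which is pleasant. Moreover, the ``assembly'' step you hedge about in your final sentence is actually fine: any nonempty $I \subseteq \{i : c_i \geq 1\}$ with $\bigcup_{i\in I}\supp(b_i)$ connected works (and the full index set qualifies, since that union is $\supp(v)=G$); then $d=\sum_{i\in I}b_i$ is a disk vector with $v-d$ a non-negative integer combination of $B(G)$, hence in $V_A$, and irreducibility forces $v=d$. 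Minimality of $I$ is irrelevant; the only thing that needs saying is that $I$ must be nonempty (otherwise $d=0$ is not a disk vector, and the empty union is vacuously connected under some conventions). The identity $\conv(\D_A+V_A)=\conv(\D_A^\star)+V_A$ holds because any $d\in\D_A$ can be reduced to an irreducible one in finitely many steps, each step strictly decreasing the integer $|d|$.

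The weak spot is the final paragraph on $\conv(\D_A)$. The claim ``for any $d\in\D_A$ and any integral $u\in V_A$ with $\supp(u)$ meeting $\supp(d)$, the vectors $d+nu$ all lie in $\D_A$'' is false as stated: if $\supp(u)$ is disconnected, then $\supp(d+nu)=\supp(d)\cup\supp(u)$ can be disconnected even when $\supp(d)$ meets one component of $\supp(u)$, so $d+nu$ need not be a disk vector. And ``a support-based analysis shows this cone is finitely generated'' is an appeal to an argument you have not given, not a proof. Since the Klein function --- and everything downstream in the paper --- is determined by $\conv(\D_A+V_A)$ alone, this is the less important half of the lemma, but as a proof of the statement as written your treatment of $\conv(\D_A)$ is incomplete and would need to be reworked, for instance by re-running the irreducibility argument while explicitly identifying the recession cone of $\conv(\D_A)$ and checking that the Minkowski sum with $\conv(\D_A^\star)$ is exactly $\conv(\D_A)$.
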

\begin{proof}
For each open face $F$ of the polyhedron $V_A$ (of any codimension $\ge 0$), the support $\supp(v)$
is constant on $F$. We denote this common support by $\supp(F)$. By definition,
$\D_A$ is the union of the integer lattice points in those open faces $F$ of $V_A$ for which
$\supp(F)$ is connected. 

If $F$ is an open polyhedral cone, the convex hull of the set of integer lattice points
in $F$ is classically called a {\em Klein polyhedron}, and its boundary is called a {\em sail}.
It is a classical fact, which goes back at least to Gordan \cite{Gordan} that if $F$ is
rational, the set of lattice points in the closure of $F$ has a finite basis (as an additive semigroup)
which is sometimes called a {\em Hilbert basis}, and the set of lattice points in the interior
is a finitely generated module over this semigroup (see e.g.\/ Barvinok \cite{Barvinok}). 
Consequently the Klein polyhedron 
is finite sided, and its vertices are a subset of a module basis. Hence $\conv(F \cap \D_A)$ is a finite
sided closed convex polyhedron for each $F$. Since $V_A$ has only finitely many faces,
the same is true of $\conv(\D_A)$ and therefore also for $\conv(\D_A + V_A)$.
The vertices of each $\conv(F \cap \D_A)$ are in $\D_A$, so the same is true for $\conv(\D_A)$
and $\conv(\D_A + V_A)$.
\end{proof}

Consequently, from Lemma~\ref{klein_is_boundary} and Lemma~\ref{boundary_is_finite_polyhedron} 
we make the following deduction:

\begin{lemma}\label{norm_piecewise_linear}
The function $\chi_o$ on $V_A$ is equal to the minimum of a finite set of
rational linear functions.
\end{lemma}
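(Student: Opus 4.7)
The plan is to leverage Lemma~\ref{klein_is_boundary} and Lemma~\ref{boundary_is_finite_polyhedron} to reduce the claim about $\chi_o$ to a statement about the Klein function $\kappa$, since $\chi_o(v) = \kappa(v) - \tfrac{1}{2}|v|$ and $v \mapsto \tfrac{1}{2}|v|$ is itself a rational linear function on $C_2(A)$ (hence on $V_A$). Thus it suffices to show that $\kappa$ is the minimum of a finite collection of rational linear functions on $V_A$.

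First I would use Lemma~\ref{boundary_is_finite_polyhedron} to write the polyhedron $P := \conv(\D_A + V_A)$ as the intersection of finitely many closed rational half-spaces. Since the vertices of $P$ are integer lattice points (hence away from the origin) and $\kappa = 1$ on $\partial P$ by Lemma~\ref{klein_is_boundary}, none of the supporting hyperplanes of the facets of $P$ passes through the origin, so each can be normalized to the form $\{v : \ell_i(v) \ge 1\}$ for a rational linear functional $\ell_i$. Thus
$$P \cap V_A = \{v \in V_A : \ell_i(v) \ge 1 \text{ for all } i\}.$$

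Next I would establish the pointwise identity $\kappa(v) = \min_i \ell_i(v)$ for $v \in V_A \setminus \{0\}$. Because $\kappa$ is linear on rays and positive, every nonzero $v \in V_A$ can be uniquely written as $v = tw$ with $t = \kappa(v) > 0$ and $w \in \partial P$. The point $w$ lies in $P$, so $\ell_i(w) \ge 1$ for all $i$, and on some facet containing $w$ there is equality $\ell_j(w) = 1$. Scaling by $t$ gives $\ell_i(v) \ge t$ for all $i$ with equality for $i = j$, hence $\min_i \ell_i(v) = t = \kappa(v)$. Concavity of $\kappa$ and linearity of each $\ell_i$ make this identification natural.

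Combining these, $\chi_o(v) = \min_i\bigl(\ell_i(v) - \tfrac{1}{2}|v|\bigr)$ on $V_A$, exhibiting $\chi_o$ as the minimum of finitely many rational linear functions, as claimed. The only subtle point — and the one I expect to require the most care — is justifying that the boundary $\partial P$ is exactly the graph of $\kappa = 1$ over $V_A$ (rather than over some larger region), so that the supporting half-spaces of $P$ give the right envelope description of $\kappa$; this is precisely what Lemma~\ref{klein_is_boundary} supplies, together with the observation that every ray from the origin in $V_A$ meets $\partial P$ in exactly one point because $\D_A \subset V_A$ and $V_A + V_A \subset V_A$.
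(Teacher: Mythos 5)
Your proof takes the same route as the paper's, which dispatches this lemma in one line (Lemmas~\ref{klein_is_boundary} and~\ref{boundary_is_finite_polyhedron} handle $\kappa$, and $|v|/2$ is linear); the fleshing-out is essentially right, but two of your intermediate assertions are slightly too strong. First, it is not true that \emph{no} facet of $P := \conv(\D_A + V_A)$ has a supporting hyperplane through the origin: since $P$ has recession cone $V_A$ and $P \subset V_A$, the facets of $P$ that lie inside facets of the cone $V_A$ do have hyperplanes through the origin (take $V_A = \R^2_{\ge 0}$, $\D_A = \{(1,0),(0,1)\}$, $P = \{x + y \ge 1\} \cap V_A$; the two axial facets pass through the origin). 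These are exactly the facets on which $\kappa \neq 1$, so Lemma~\ref{klein_is_boundary} must be read as asserting $\kappa = 1$ on the sail --- the boundary of $P$ relative to $V_A$ --- not on all of $\partial P$ in the ambient space. Second, for the same reason, your claim that every ray from the origin meets $\partial P$ exactly once fails on rays in $\partial V_A$ that either miss $P$ entirely (so $\kappa = 0$ on them) or eventually run along a facet of $V_A$.

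The fix is minor: normalize only the facet functionals $\ell_i$ whose hyperplanes avoid the origin. They satisfy $\ell_i \ge 1$ on $P$, and $\ell_i \ge 0$ on $V_A$ because $V_A$ is the recession cone of $P$. Then $P \cap V_A = \{v \in V_A : \ell_i(v) \ge 1 \text{ for all } i\}$ still holds (the discarded origin facets contribute constraints already implied by $v \in V_A$), and your scaling argument shows $\kappa = \min_i \ell_i$ at any $v$ with $\kappa(v) > 0$. For $v$ with $\kappa(v) = 0$ one checks separately that $\min_i \ell_i(v) = 0$: non-negativity gives $\min_i \ell_i(v) \ge 0$, and if $\min_i \ell_i(v) = c > 0$ then $v/c \in V_A$ satisfies $\ell_i(v/c) \ge 1$ for all $i$, hence $v/c \in P$ and $\kappa(v/c) \ge 1$, contradicting $\kappa(v/c) = 0$. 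With these adjustments the proof is complete and matches the paper's.
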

\begin{proof}
This is true for $\kappa$, and therefore for $\kappa - |v|/2$.
\end{proof}

\begin{remark}
There is a close connection between vertices of the Klein polyhedron and continued fractions. 
If $F$ is a sector in $\R^2$, the sail is topologically a copy
of $\R$, and the vertices of the sail are integer lattice points in $\Z^2$ whose ratios are the
continued fraction approximations to the slopes of the sides of $\overline{F}$. Klein \cite{Klein}
introduced Klein polyhedra and sails (for not necessarily rational polyhedral cones $F$) 
in an effort to generalize the theory of continued fractions to higher dimensions.
In recent times this effort has been pursued by Arnold \cite{Arnold} and his school.
\end{remark}

\subsection{Rationality of scl}

We are now in a position to prove the main theorem of this section.

\begin{theorem}[Rationality]\label{rationality_theorem}
Let $G = *_i A_i$ be a free product of finitely many finitely generated free Abelian groups. 
Then $\scl$ is a piecewise rational
linear function on $B_1^H(G)$. Moreover, there is an algorithm to compute $\scl$ in any
finite dimensional rational subspace.
\end{theorem}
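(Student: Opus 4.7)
The plan is to reduce the computation of $\scl$ on any rational chain $c \in B_1^H(G)$ to a rational linear program whose data depends linearly on $c$, and then invoke parametric LP theory together with Lemma~\ref{norm_piecewise_linear}. The structure already developed in \S~\ref{constructing_extremal_surfaces_subsection} does most of the work: fix a finite set $Z$ of nontrivial conjugacy classes with tight representatives, giving a polygonal $1$-manifold $L$ with $\tau$-edge sets $T(A_i)$ distributed among the factors; any positive admissible surface $S$ of degree $n(S)$ decomposes into pieces $S_{A_i}$ encoded by $v_i = v(S_{A_i}) \in V_{A_i}$, and the corner-counting identity (summed over $i$, using $\sum_i |v_i| = 2|F|$ and $\chi(S) = \sum_i \chi(S_{A_i}) - |F|$) yields the clean formula $\chi(S) = \sum_i \chi_o(S_{A_i})$.

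Next I would set up the linear constraints on $(v_i, n)$ that record admissibility for $c$. The coefficients of $c$ define an ``edge-count'' vector $e_i(c) \in C_1(A_i)$ that depends linearly on $c$, and the number of $\tau$-edges of each type $\tau \in T(A_i)$ in $\partial S_{A_i}$ equals both $n\,e_i(c)_\tau$ and the linear functional $\pi_i(v_i)_\tau := \sum_{\tau'} v_i(\tau,\tau')$ (the two expressions agree because $\partial v_i = 0$). This gives the rational linear matching constraint $\pi_i(v_i) = n\,e_i(c)$, with a single globally consistent $n$ forced across factors by the cyclic structure of $L$. Combining the inequality $\chi_o(v_i) \ge \chi_o(S_{A_i})$ with the approximate converse (for every rational $v \in V_{A_i}$ and $\epsilon>0$ there is a surface $S_{A_i}$ with $v(S_{A_i})=nv$ and $|\chi_o(S_{A_i})/n - \chi_o(v)|<\epsilon$), and rescaling $w_i = v_i/n$, gives
$$\scl(c) \;=\; -\tfrac{1}{2}\sup \left\{ \textstyle\sum_i \chi_o(w_i) \,:\, w_i \in V_{A_i},\; \pi_i(w_i) = e_i(c) \right\}.$$
Here the reduction $\chi^-(S) = \chi(S)$ is automatic: nontriviality of the $g_j$ rules out disk components in a positive admissible $S$, while sphere components contribute nothing to $\chi^-$ and may be discarded.

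To finish, Lemma~\ref{norm_piecewise_linear} expresses each $\chi_o$ as a minimum of finitely many rational linear functionals; introducing slack variables $s_i$ with inequalities $s_i \le \ell(w_i)$ for every defining $\ell$ turns the sup into a genuine rational linear program in $(w_i, s_i)$ whose right-hand side depends linearly on $c$. Parametric LP theory (applied to any finite dimensional rational subspace $W\subset\langle Z\rangle\cap B_1^H$) then gives piecewise rational linearity of the optimal value on $W$, hence on all of $B_1^H(G)$. The defining functionals of $\chi_o$ can be computed from a Hilbert basis for the semigroup of lattice points in each face of $V_{A_i}$ (Lemma~\ref{boundary_is_finite_polyhedron}), so the LP is effectively solvable, establishing the algorithmic statement. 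The main obstacle I expect is the bookkeeping for the matching constraint: one must verify that every integral solution $(v_i, n)$ to the linear system can actually be glued along its $\sigma$-edges into a single connected admissible surface of degree $n$, rather than merely producing a disjoint collection of factor pieces.
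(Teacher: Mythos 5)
Your proposal reproduces most of the architecture of the paper's proof — the decomposition of an admissible surface into pieces $S_{A_i}$, the encoding by vectors $v_i\in V_{A_i}$, the corner-counting identity $\chi(S)=\sum_i\chi_o(S_{A_i})$, and the linearization of $\chi_o$ via Lemma~\ref{norm_piecewise_linear} and slack variables — but it has a genuine gap in the constraint set of the linear program, and the gap changes the optimal value.

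Your only matching constraint is $\pi_i(v_i)=n\,e_i(c)$, which records the number of $\tau$-edges of each type appearing in $\partial S_{A_i}$. This is a \emph{necessary} condition for $(v_i)$ to arise from an admissible surface, but it is not sufficient, and the resulting linear program is a strict \emph{relaxation} of the correct one. What is missing is the condition that the pieces $S_{A_i}$ can actually be glued together along the $\sigma$-edges, which requires the full $\sigma$-edge data (coordinates of $v_A$ in $T_2(A)$, not just their marginals $\pi_A(v_A)\in C_1(A)$) to match the corresponding $\sigma$-edge data on the other side. In the paper this is the definition of the subcone $Y\subset V_A\times V_B$: for each non-Abelian coordinate $(\tau_A,\tau'_A)\in T_2(A)$ there is a \emph{paired} coordinate $(\tau_B,\tau'_B)\in T_2(B)$, and $Y$ requires equality of paired coordinates. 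This imposes roughly $|T(A)|^2$ linear equalities; your $\pi$-constraint imposes only $|T(A)|$. You flag ``the matching constraint'' as an anticipated obstacle, but you describe it as a question of whether a solution can be assembled into a \emph{connected} admissible surface, which is not the issue (admissible surfaces need not be connected); the real issue is that without the pairing constraints the $A$-side and $B$-side data may be entirely incompatible, so that no gluing exists.

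To see that the relaxation gives the wrong value, take $G=F_2$ and $c=[a,b]$. Here $T(A)=\{a,a^{-1}\}$, and $V_A$ is the set of $(x_1,x_2,x_2,x_1)$ with $x_1,x_2\ge0$ in coordinates $(a,a),(a,a^{-1}),(a^{-1},a),(a^{-1},a^{-1})$; one checks $\conv(\D_A+V_A)=\{x_2\ge 1\}$, so $\kappa_A=x_2$ and $\chi_o(v_A)=x_2-(x_1+x_2)=-x_1$, and symmetrically $\chi_o(v_B)=-y_1$. Your constraint (at degree $n$) is $x_1+x_2=n$, $y_1+y_2=n$; maximizing $-x_1-y_1$ over this set gives $0$ at $x_1=y_1=0$, hence $\scl=0$, which is false. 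The gluing constraints of the paper force $x_1=y_2$ and $x_2=y_1$, after which $-x_1-y_1=-n$ identically, giving the correct $\scl([a,b])=1/2$. So the pairing constraints are not bookkeeping one can defer: they cut the feasible polyhedron down to the correct $Y_l$ and are needed even to get the right answer in the simplest example.

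Everything else in your outline is sound and agrees with the paper: $\chi^-=\chi$ after discarding spheres, linearizing $\chi_o$ as a min of finitely many rational linear functionals and introducing slack variables, solving the resulting LP by the simplex method, and observing that the optimal value depends piecewise rationally linearly on the right-hand side. If you replace the $\pi$-constraint with the paired-coordinate constraint defining $Y$ (and, in the multi-factor case, attach the $K(A_i,1)$'s so no three meet at a point, so $\sigma$-edges are glued in pairs), the argument goes through exactly as in the paper.
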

\begin{proof}
We first prove the theorem in the case $G=A*B$ where $A$ and $B$ are free and finitely generated
as above.

We have rational polyhedral cones $V_A,V_B$ in $C_2(A)$ and $C_2(B)$ respectively,
which come together with convex piecewise rational linear functions $\chi_o$. 
There is a rational subcone $Y \subset V_A \times V_B$ consisting of pairs of vectors $(v_A,v_B)$
in $V_A \times V_B$ which can be
{\em glued up} in the following sense. For each co-ordinate $(\tau_A,\tau_A') \in T_2(A)$
whose entries are not Abelian loops, there is a corresponding co-ordinate $(\tau_B,\tau_B') \in T_2(B)$
determined by the property that as oriented arcs in $L$, the arc $\tau_B$ is followed by $\tau_A'$,
and $\tau_A$ is followed by $\tau_B'$. Say that this pair of co-ordinates are {\em paired}.
Then $Y$ is the subspace consisting of pairs of vectors whose paired co-ordinates are equal.
Define $\chi$ on $Y$ by $\chi(v_A,v_B) = \chi_o(v_A) + \chi_o(v_B)$. 
By Lemma~\ref{norm_piecewise_linear}, the function $\chi$ is equal to the minimum of a finite set
of rational linear functions on $Y$.
Finally, there is a linear map $d:Y \to H_1(L)$ with the property that a surface $S = S_A \cup S_B$
with $(v(S_A),v(S_B)) = y \in Y$ satisfies $\partial f_*(\partial S) = dy \in H_1(L)$.

Define a linear programming problem as follows. For $l \in H_1(L)$, define $Y_l \subset Y$
to be the polyhedron which is equal to the preimage $d^{-1}(l)$. Then define
$$\scl(l) = -\max_{y \in Y_l} \chi(y)/2$$
Since $Y$ and therefore $Y_l$ are finite sided rational polyhedra, and $\chi$ is the minimum
of a finite set of rational linear functions on these polyhedra, the maximum of $\chi$ on $Y_l$
can be found algorithmically by linear programming
(e.g.\/ by Dantzig's simplex method \cite{Dantzig}), 
and is achieved precisely on a rational subpolyhedron of $Y_l$. Note that although
maximizing the minimum of several linear functions is ostensibly a nonlinear optimization
problem, it may be linearized in the standard way, by introducing extra slack variables, and
turning the linear terms (over which one is minimizing) into {\em constraints}. See e.g.\/
Dantzig \cite{Dantzig} \S~13.2.

\medskip

The case of finitely many terms is not substantially more difficult. There is a cone
$V_i$ and a piecewise rational linear function $\chi_o$ for each $A_i$, and a
slightly more complicated gluing condition to define the subspace $Y$, but there are no
essentially new ideas involved. One minor observation is that one should build a $K(G,1)$
by gluing up $K(A_i,1)$'s so that no three factors are attached at the same point. This
ensures that the surfaces $S_i$ mapping to each factor are glued up along 
genuine $\sigma$-arcs in pairs, and not in more complicated combinatorial configurations.
We leave details to the reader.
\end{proof}

\begin{remark}
If $G = *_i A_i$ where each $A_i$ is finitely generated Abelian but not necessarily
torsion free, there is a finite index subgroup $G'$ of $G$ which is a free product of
free Abelian groups. The piecewise rational linear property of $\scl$ is inherited by
finite-index supergroups. Hence $\scl$ is piecewise rational linear on $B_1^H(G)$
in this case too. A similar observation applies to amalgamations of such groups 
over finite subgroups.
\end{remark}

A perhaps surprising corollary of the method of proof is the following:

\begin{corollary}\label{injective_is_isometry}
Let $\lbrace A_i \rbrace$ and $\lbrace B_i \rbrace$ be finite families of finitely generated free
Abelian groups. For each $i$, let $\rho_i: A_i \to B_i$ be an injective homomorphism, and let 
$\rho: *_i A_i \to *_i B_i$
be the corresponding injective homomorphism. Then $\rho$ induces an isometry of the
$\scl$ norm. That is, for all chains $c \in B_1^H(*_i A_i)$, there is equality
$\scl(c) = \scl(\rho(c))$.
\end{corollary}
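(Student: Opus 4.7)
The easy half $\scl(\rho(c)) \le \scl(c)$ is automatic, since any admissible surface $f\colon S \to K(*_iA_i,1)$ composes with the natural map to $K(*_iB_i,1)$ induced by $\rho$ to give an admissible surface for $\rho(c)$ with the same Euler characteristic and degree. The content of the corollary lies in the reverse inequality, and my plan is to show that the linear programs computing $\scl(c)$ and $\scl(\rho(c))$ via Theorem~\ref{rationality_theorem} are literally the same LP after applying the bijections induced by the $\rho_i$.

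First I would fix a tight loop $L$ representing the conjugacy classes supporting $c$ and observe that $\rho_*L$ is then a legitimate tight loop for $\rho(c)$: injectivity of the $\rho_i$ ensures that no two consecutive nonabelian letters collide or become trivial, so the polygonal structures on $L$ and on $\rho_*L$ are canonically identified. This yields bijections $T(A_i) \to T(B_i)$ and $T_2(A_i) \to T_2(B_i)$ which commute with the boundary map $\partial\colon C_2 \to C_1$ and which extend to a linear isomorphism $\iota\colon C_2(A_i) \to C_2(B_i)$ sending the positive orthant to the positive orthant.

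The key step, and the place where injectivity of $\rho_i$ really gets used, will be to show that $\iota$ restricts to an isomorphism $V_{A_i} \cong V_{B_i}$ of rational polyhedral cones. By construction $h_{B_i}(\iota v) = (\rho_i \otimes \R)(h_{A_i}(v))$, so the question reduces to injectivity of $\rho_i\otimes \R\colon A_i \otimes \R \to B_i \otimes \R$. Because $A_i$ and $B_i$ are finitely generated free Abelian and $\rho_i$ is injective, $\rho_i\otimes \R$ is injective as a map of real vector spaces, whence $h_{A_i}(v)=0$ if and only if $h_{B_i}(\iota v)=0$. I expect this to be the main obstacle, in the sense that it is the only place where the injectivity hypothesis enters non-formally; everything else is bookkeeping.

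Once this is in hand the remaining verifications are routine. Disk vectors are defined by a combinatorial condition on $\supp(v) \subset T_2$ that is preserved by $\iota$, so the Klein function $\kappa$ and hence $\chi_o$ transport under $\iota$ by Lemma~\ref{klein_is_boundary}. The gluing condition cutting out $Y$ inside $\prod_i V_{A_i}$ (respectively $\prod_i V_{B_i}$) is read off from the cyclic structure of the tight loop, which is preserved by $\rho_*$; the boundary datum $l \in H_1(L)$ is likewise preserved. Hence the polyhedra $Y_l$ and the piecewise rational linear objectives $\chi$ on them coincide under $\iota$, so the two linear programs achieve the same optimum and $\scl(c) = \scl(\rho(c))$.
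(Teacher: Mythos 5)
Your proof is correct and takes essentially the same approach as the paper's, which compresses the whole argument to the observation that the only place the groups $A_i$ enter the linear program is through the subspace $h^{-1}(0)$, which is unchanged because $\rho_i\otimes\R$ is injective, so the programs for $c$ and $\rho(c)$ coincide. You usefully spell out the bookkeeping the paper leaves implicit (tight loops mapping to tight loops, the bijections on $T$ and $T_2$, invariance of disk vectors and of the gluing conditions) and add the remark that the inequality $\scl(\rho(c)) \le \scl(c)$ is automatic from monotonicity.
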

\begin{proof}
An injective homomorphism $\rho_i:A_i \to B_i$ induces an injective homomorphism of vector spaces
$A_i \otimes \R \to B_i \otimes \R$.
The only place in the calculation of $\scl$ that the groups $A_i$ enter is in the homomorphisms
$h:C_2(A_i) \to A_i \otimes \R$, and the map $h$ is only introduced in order to determine
the subspace $h^{-1}(0)$. Since $(\rho_i \circ h)^{-1}(0) = h^{-1}(0)$, the linear programming
problems defined by chains $c$ and $\rho(c)$ are the same, so the values of $\scl$ are the same.
\end{proof}

\begin{example}\label{power_isometry_example}
Corollary~\ref{injective_is_isometry} is interesting even (especially?) in the
rank $1$ case. Let $G=F_2$, freely generated by elements $a,b$. Then for any non-zero
integers $n,m$ the homomorphism $\rho:F_2 \to F_2$ defined by $\rho(a) = a^m, \rho(b) = b^n$ is
an isometry for $\scl$. Hence (for instance) every value of $\scl$ which is achieved in
a free group is achieved on infinitely many automorphism orbits of elements.

The composition of an arbitrary alternating sequence of automorphisms and injective 
homomorphisms as above can be quite complicated, and shows that $B_1^H(F_2)$ admits a
surprisingly large family of (not necessarily surjective) isometries.
\end{example}

If $G$ is (virtually) free, every vector in $B_1^H(G)$ with positive $\scl$ norm
rationally bounds an extremal surface, by the main theorem of \cite{Calegari_pickle}. 
However, if $G$ is a free product of Abelian groups
of higher rank, extremal surfaces are {\em not} guaranteed to exist. For a vector
$v \in V_A$ to be represented by an injective surface it is necessary that it
should be expressible as a sum $v = \sum v_i$ where each $v_i$ is in $V_A$, and $|X(v_i)| \le 2$
for each $i$. The $v_i$ correspond to the connected components of $S_A$ with $v(S_A) = v$.
Since $A$ is Abelian, for $\pi_1(S) \to A$ to be injective, every component of $S_A$ must
be either a disk (in which case $|X(v_i)| = 1$) or an annulus (in which case $|X(v_i)| = 2$).

\begin{example}\label{no_injective_representative}
In $\Z * \Z^2$, let the $\Z$ factor be generated by $a$, 
and let $v_1,v_2$ be generators for the $\Z^2$ factor.
The chain $c=av_1^2a^{-1}v_1^{-1} + v_2 + v_1^{-1}v_2^{-1}$ satisfies $\scl(c) = 1/2$, but no
extremal surface rationally fills $c$, and in fact, there does not even exist a $\pi_1$-injective
surface filling a multiple of $c$. To see this, observe that every non-negative
$v \in V_B$ has $|X(v)|\ge 3$, and therefore every surface $S_B$ with $v(S_B) = v$ 
has nonabelian (and therefore non-injective) fundamental group. 

Let $G$ be the group obtained by doubling $\Z * \Z^2$ along $c$. Notice that $G$ is
$\CAT(0)$, since a $K(G,1)$ can be obtained by attaching three flat annuli to two copies of
$S^1 \vee T^2$ along pairs of geodesic loops corresponding to the terms in $c$.
The Gromov norm on $H_2(G;\Q)$ is piecewise rational linear. On the other hand, if
$\alpha \in H_2(G;\Q)$ is any nonzero class obtained by gluing relative classes on either side
along $c$, then no surface mapping to a $K(G,1)$ in the projective class of $\alpha$
can be $\pi_1$-injective.
\end{example}

\begin{remark}
Example~\ref{no_injective_representative} suggests a connection to the simple loop conjecture
in $3$-manifold topology.
\end{remark}

\begin{example}\label{monotone_words}
The support of a disk vector cannot include a vertex corresponding to an Abelian loop. This observation
considerably simplifies the calculation of $\scl$ on certain chains. Consider
a chain of the form $w = a^{-\alpha} + b^{-\beta} + w'$ where $\alpha$ and $\beta$ are positive, and
$w'$ is either a single word or a chain composed only of the letters $a$ and $b$ (and not their
inverses). Suppose further that $w \in B_1^H(F_2)$, so that $\scl(w)$ is finite. Then by the remark
above, there are no disk vectors, so $\chi_o = -|v|/2$ and $\scl(w) \in \frac 1 2 \Z$.

Explicitly, suppose
$w = a^{-\alpha} + b^{-\beta} + \sum w_i$ where each $w_i$ is of the form
$$w_i = a^{\alpha_{i,1}}b^{\beta_{i,1}}\cdots a^{\alpha_{i,n_i}}b^{\beta_{i,n_i}}$$ 
where each $\alpha_{i,j}$ and each $\beta_{i,j}$ is positive, and $\sum_{i,j} \alpha_{i,j} = \alpha$,
$\sum_{i,j} \beta_{i,j} = \beta$. Recall that
Abelian loops do not contribute to $|v_A|$ or $|v_B|$. If $S$ is a surface with $v(S) = (Nv_A,Nv_B)$
then $\partial S$ wraps around each $\tau$-edge with multiplicity exactly $N$. Hence each $w_i$ 
contributes $n_i$ to $v_A$ and similarly for $v_B$, and therefore $|v_A|=|v_B|=\sum_i n_i$.
In particular, $\chi$ is constant on the polyhedron $Y_l$, and $\scl(w)=\frac 1 2 \sum_i n_i$.
\end{example}

In fact, the same argument shows that $|v_A|$ and $|v_B|$ are constant on $Y_l$, and therefore we can
calculate $\scl$ by maximizing $\kappa$ instead of $\chi_o$. We record this fact as a proposition:

\begin{proposition}\label{norm_is_linear}
For any chain $w$ and any $l \in H_1(L;\Z)$, the functions $|v_A|$ and $|v_B|$ are constant on $Y_l$,
and take values in $\Z$, with notation as above.
\end{proposition}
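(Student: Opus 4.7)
The plan is to show that $|v_A|$ and $|v_B|$ are each linear functions of $l$ with non-negative integer coefficients, and hence are constant on every $Y_l$ and integer-valued for $l \in H_1(L;\Z)$. The central observation is that, given $(v_A, v_B) \in Y$, the multiplicity with which each non-Abelian $\tau$-edge of $L$ appears in the flow encoded by $v_A$ or $v_B$ depends only on which component $\gamma_i$ of $L$ contains it, and this common multiplicity is precisely the $i$-th coordinate $l_i$ of $l = d(v_A,v_B)$.

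Concretely, for each non-Abelian $\tau_A \in T(A)$, write $c_{(\tau,\tau')}$ for the coefficient of $(\tau,\tau')$ in $v_A$ and define the multiplicity $m(\tau_A) = \sum_{\tau_A'} c_{(\tau_A,\tau_A')}$; by $\partial v_A = 0$ this equals the indegree $\sum_{\tau_A''} c_{(\tau_A'',\tau_A)}$. Define $m(\tau_B)$ analogously. The key identity I would prove is that $m(\tau_A) = m(\tau_B')$ whenever $\tau_B'$ is the $B$-edge immediately following $\tau_A$ in $L$. To see it, invoke the pairing condition: the coordinate of $v_B$ paired to $(\tau_A, \tau_A') \in T_2(A)$ is $(\tau_B, \tau_B')$ where $\tau_B'$ is the $B$-edge following $\tau_A$ in $L$ (fixed, independent of $\tau_A'$) and $\tau_B$ is the $B$-edge preceding $\tau_A'$ in $L$. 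Since each non-Abelian tight loop alternates $A$- and $B$-edges, as $\tau_A'$ ranges over all non-Abelian $A$-edges the edge $\tau_B$ traces out the non-Abelian $B$-edges bijectively. Thus
\[
m(\tau_A) = \sum_{\tau_A'} c_{(\tau_A,\tau_A')} = \sum_{\tau_B} c_{(\tau_B,\tau_B')} = m(\tau_B'),
\]
the last equality being the identification of indegree with outdegree at $\tau_B'$ in $v_B$.

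Iterating this identity alternately around any non-Abelian component $\gamma_i$ of $L$ forces every edge of $\gamma_i$ to share a common multiplicity $l_i$. Taking any surface $S$ with $v(S) = (v_A,v_B)$, the boundary $\partial f$ wraps $\gamma_i$ with degree exactly this common multiplicity, so $l_i$ is the $i$-th coordinate of $d(v_A,v_B) = l$. Summing the contributions,
\[
|v_A| = \sum_{\tau_A' \text{ non-Abelian}} m(\tau_A') = \sum_{i:\, \gamma_i \text{ non-Abelian}} l_i\, k_i^A,
\]
where $k_i^A$ is the number of $A$-edges in the polygonal structure on $\gamma_i$; this is manifestly constant on $Y_l$ and lies in $\Z$ when $l \in H_1(L;\Z)$. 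The formula for $|v_B|$ is identical with the roles of $A$ and $B$ exchanged. The main thing to watch is the exclusion of Abelian loops from $T_2(\cdot)$: this is precisely what ensures the pairing bijection between non-Abelian $\tau_A'$ and non-Abelian $\tau_B$ is clean, and it is also why the Abelian-loop coordinates, whose multiplicities are not constrained by $l$, do not enter the formula for $|v_A|$ or $|v_B|$.
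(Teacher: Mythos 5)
Your proof is correct and follows essentially the approach the paper intends: the paper asserts (after Example~\ref{monotone_words}) that the same argument works, phrased via a covering $\partial f:\partial S\to L$, and you make this precise combinatorially by using the pairing condition on $Y$ together with $\partial v_A=\partial v_B=0$ to propagate a common multiplicity around each non-Abelian component of $L$, then summing to get $|v_A|=\sum_i l_i k_i^A$. One small misstatement in your closing remark: the Abelian-loop coordinates \emph{are} constrained by $l$ (the coordinate of $l$ corresponding to an Abelian component $\gamma_j$ is exactly the coefficient of the diagonal entry $(\tau_j,\tau_j)$), but since these coefficients are excluded from $|v_A|$ and $|v_B|$ by definition, this has no bearing on the argument.
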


\section{Surgery}\label{surgery_section}

In this section we study how $\scl$ varies in {\em families} of elements, especially those 
obtained by {\em surgery}. In $3$-manifold topology, one is {\em a priori} interested in 
closed $3$-manifolds. But experience shows that $3$-manifolds obtained by (Dehn) surgery on
a fixed $3$-manifold with torus boundary are related in understandable ways. Similarly, even if
one is only interested in $\scl$ in free groups (for some of the reasons suggested in the
introduction), it is worthwhile to study how $\scl$ behaves under surgery on free products of
free Abelian groups of higher ranks. In this analogy, the free Abelian factors correspond to
the peripheral $\Z^2$ subgroups in the fundamental group of a $3$-manifold with torus boundary.

\begin{definition}
Let $\lbrace A_i \rbrace$ and $\lbrace B_i \rbrace$ be two families of free Abelian groups. A family of
homomorphisms $\rho_i: A_i \to B_i$ induces a homomorphism $\rho: *_i A_i \to *_i B_i$.
We say that $\rho$ is induced by {\em surgery}. If $C \in B_1^H(*_i A_i)$, then we say that
$\rho(C) \in B_1^H(*_i B_i)$ is {\em obtained by surgery} on $C$.
\end{definition}

By Corollary~\ref{injective_is_isometry}, it suffices to consider surgery in situations where each
$\rho_i$ is surjective after tensoring with $\R$.

One also studies {\em families} of surgeries, with fixed domain and range, in which the homomorphisms
$\rho_i$ depend linearly on a parameter.

\begin{definition}
With notation as above, let $\sigma_i:A_i \to B_i$ and $\tau_i:A_i \to B_i$ be two families of homomorphisms.
For each $p \in \Z$, define $\rho_i(p):A_i \to B_i$ by $\rho_i(p) = \sigma_i + p\tau_i$, and define
$\rho(p):*_i A_i \to *_i B_i$ similarly. We refer to the $\rho(p)$ as a {\em line} of surgeries. If
$C \in B_1^H(*_i A_i)$, we say the $\rho(p)(C)$ are {\em obtained by a line of surgeries} on $C$.
\end{definition}

\subsection{An example}\label{detailed_example_section}

In this section we work out an explicit example of a (multi-parameter) family of surgeries.
Given a $4$-tuple of integers $\alpha_1,\alpha_2,\beta_1,\beta_2$ we define
an element $w_{\alpha_1,\alpha_2,\beta_1,\beta_2}$ in $B_1^H(F_2)$ (or just $w_{\alpha,\beta}$ for
short) by the formula
$$w_{\alpha,\beta}: = a^{-\alpha_1 -\alpha_2} + b^{-\beta_1- \beta_2} +
a^{\alpha_1}b^{\beta_1} + a^{\alpha_2}b^{\beta_2}$$
We can think of this as a family of elements obtained by surgery on a fixed element in $B_1^H(\Z^2 * \Z^2)$.
We will derive an explicit formula for $\scl(w_{\alpha,\beta})$ in terms of $\alpha$ and $\beta$,
by the methods of \S~\ref{free_product_section}.

Note that by Example~\ref{power_isometry_example}, we can assume that $\alpha_1$ and $\alpha_2$
are coprime, and similarly for the $\beta_i$. We make this assumption in the sequel. 
Finally, after interchanging $a$ with $a^{-1}$ or $b$ with
$b^{-1}$ if necessary, we assume $\alpha_1$ and $\beta_1$ are strictly positive.
The calculation of $\scl(w)$ reduces to a finite number of cases. We concentrate on a
specific case; in the sequel we therefore assume:
$$\alpha_1 > \alpha_1+\alpha_2 > 0 > \alpha_2, \quad \beta_1 > \beta_1+\beta_2 > 0 > \beta_2$$

We write $F_2 = A*B$ where $A = \langle a\rangle$ and $B = \langle b \rangle$.
The set $T(A)$ has three elements, corresponding to the three substrings of $w$ of the
form $a^*$. We denote these elements $1,2,3$. Since $1$ is an Abelian loop, $T_2(A)$ has
five elements; i.e.\/ $T_2(A) = \lbrace (1,1), (2,2), (2,3), (3,2), (3,3) \rbrace$. Let
$v \in V_A$ have co-ordinates $v_1$ through $v_5$. By the definition of $V_A$, the $v_i$ are
non-negative. The constraint that $\partial(v) = 0$ is equivalent to $v_3=v_4$. 
Hence in the sequel we will equate $v_3$ and $v_4$, and write a vector in $V_A$ in the form
$(v_1,v_2,v_3,v_5)$. In this basis, the constraint that $h(v)=0$ reduces to
$$(\alpha_1 + \alpha_2)(v_1-v_3) = \alpha_1v_2 + \alpha_2v_5$$
which we rewrite as
$$v_3 = v_1 - (\alpha_1 v_2 + \alpha_2 v_5)/(\alpha_1 + \alpha_2)$$
See Figure~\ref{xyz_graph}. 

\begin{figure}[htpb]
\labellist
\small\hair 2pt
\pinlabel $v_1$ at 50 200
\pinlabel $v_2$ at 198 200
\pinlabel $v_3$ at 300 225
\pinlabel $v_5$ at 400 200
\pinlabel $v_3=v_4$ at 300 175
\endlabellist
\centering
\includegraphics[scale=0.75]{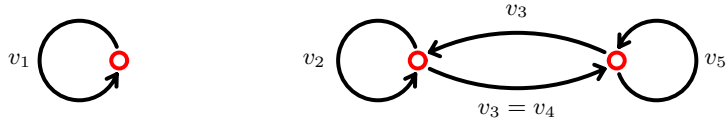}
\caption{The weighted graph $X(v)$ associated to $v\in V_A$, where
$v_3 = v_1 - (\alpha_1 v_2 + \alpha_2 v_5)/(\alpha_1 + \alpha_2)$ is necessarily non-negative.}\label{xyz_graph}
\end{figure}

The cone $V_A$ has four extremal vectors $\xi_i$ in $(v_1,v_2,v_3,v_5)$ co-ordinates,
which are the columns of the matrix
$$M_A: = \begin{pmatrix}
0 & 0 & 1 & \alpha_1 \\
-\alpha_2 & 0 & 0 & \alpha_1+\alpha_2 \\
0 & -\alpha_2 & 1 & 0 \\
\alpha_1 & \alpha_1 + \alpha_2 & 0 & 0 \\
\end{pmatrix}$$
By our assumption of the $\alpha_i$, the entries of this matrix are non-negative (as they must be).
Note that these four vectors are linearly dependent, and $V_A$ is the cone on a planar
quadrilateral. The cone $V_A$ is the image of the non-negative orthant in $\R^4$ under
multiplication on the left by $M_A$.

\medskip

A nonzero nonnegative vector $(v_1,v_2,v_3,v_5)$ is a disk vector if
and only if it is integral, if $v_1 = 0$, and if
$v_3 = - (\alpha_1 v_2 + \alpha_2 v_5)/(\alpha_1 + \alpha_2)> 0$.
Since we are assuming $\alpha_1+\alpha_2 > 0$, the disk vectors are all contained in
the face of $V_A$ spanned by $\xi_1$ and $\xi_2$ (i.e.\/ the face with $v_1=0$). The
set of disk vectors are precisely vectors of the form $(0,p,q,-(p+q)\alpha_1/\alpha_2-q)$
where $p,q$ are integers such that $q>0,p\ge 0$ and $-\alpha_2 | p+q$. Thus the 
Klein polyhedron $\conv(\D_A + V_A)$ has vertices $\xi_2$ and $d = (0,-1-\alpha_2,1,\alpha_1-1)$.

The Klein polyhedron has three faces which are not contained in faces of $V_A$. The
first face $K_1$ has vertex $\xi_2$, and has extremal rays $\xi_2 + t\xi_3$ and
$\xi_2 + t\xi_4$ for $t\ge 0$. The second face $K_2$ has vertices $\xi_2$ and $d$,
and has extremal rays $\xi_2+t\xi_4$ and $d+t\xi_4$, as well as the interval from $\xi_2$ to $d$.
The third face $K_3$ has vertex $d$ and extremal rays $d+t\xi_4$ and $d+t\xi_1$. The Klein
function $\kappa_A$ has the form
$$\kappa_A = \begin{cases}
\frac 1 {\alpha_2} \left( v_1 - \alpha_1 v_2/(\alpha_1+\alpha_2) - v_3 \right) &
\text{on the cone of }K_1 \\
\frac 1 {\alpha_2} \left( (\alpha_1+\alpha_2)v_1/\alpha_1 - v_2 - v_3 \right) &
\text{on the cone of }K_2 \\
v_3 & \text{on the cone of }K_3 \\
\end{cases}$$
while on all of $V_A$ we have $|v|/2 = (v_2+2v_3+v_5)/2$.

By the symmetry of $w_{\alpha,\beta}$, we obtain similar expressions for a typical vector
$(u_1,u_2,u_3,u_4,u_5)$ in $V_B$. 
With this notation, the polyhedron $Y$ is the subspace of $V_A \times V_B$ consisting
of vectors for which $u_2=v_2,u_3=v_3, u_4=v_4$ and $u_5=v_5$. The two Abelian loops themselves
impose no pairing conditions, but since we can write both $u_1$ and $v_1$ in terms of the other $u_i,v_i$
(in the same way), the equalities above imply $u_1=v_1$.

Setting $d(y)=[w_{\alpha,\beta}]$ imposes two more conditions on the vectors (at first glance it
looks like it imposes four conditions since there are four terms in $w$, 
but two of these conditions are already implicit in $v_3=v_4$ and $u_3=u_4$ which were consequences
of $\partial = 0$). These two extra conditions take the form
$v_2=v_5$ and $v_3 = 1-v_2$.

The two conditions give $v_1=1$, $v_2=v_5=x$ and $v_3=1-x$.
Making these substitutions, we find that $Y_{[w]}$ is the polygon $0 \le x \le 1$.

To compute $\scl$ we must maximize $\chi$ on $Y_{[w]}$. In terms of $x$, the function
$\chi$ is equal to $\kappa_A + \kappa_B - 2$ where
$$\kappa_A = \begin{cases}
x/(\alpha_1+\alpha_2) & \text{if } x \le (\alpha_1 +\alpha_2)/\alpha_1 \\
1/\alpha_1 & \text{if } (\alpha_1+\alpha_2)/\alpha_1 \le x \le (\alpha_1-1)/\alpha_1 \\
1-x & \text{if } x \ge (\alpha_1-1)/\alpha_1 \\
\end{cases}$$
and similarly for $\kappa_B$:
$$\kappa_B = \begin{cases}
x/(\beta_1+\beta_2) & \text{if } x \le (\beta_1 +\beta_2)/\beta_1 \\
1/\beta_1 & \text{if } (\beta_1+\beta_2)/\beta_1 \le x \le (\beta_1-1)/\beta_1 \\
1-x & \text{if } x \ge (\beta_1-1)/\beta_1 \\
\end{cases}$$
Then $\scl(w) = 1 - \max(\kappa_A(x) + \kappa_B(x))/2$.

\begin{proposition}\label{explicit_computation_proposition}
Let $w = a^{-\alpha_1-\alpha_2} + b^{-\beta_1-\beta_2} + a^{\alpha_1}b^{\beta_1} + a^{\alpha_2}b^{\beta_2}$
where the $\alpha_i$ and coprime and similarly for the $\beta_i$, and they satisfy
$$\alpha_1 > \alpha_1+\alpha_2 > 0 > \alpha_2, \quad \beta_1 > \beta_1+\beta_2 > 0 > \beta_2$$
We have the following formulae for $\scl(w)$ by cases:
\begin{enumerate}
\item{If $(\alpha_1-1)/\alpha_1 \le (\beta_1+\beta_2)/\beta_1$ then
$\scl(w) = 1 - \frac 1 2\left(\frac 1 {\alpha_1} + \frac {(\alpha_1-1)} {\alpha_1(\beta_1+\beta_2)}\right)$}
\item{If $(\beta_1-1)/\beta_1 \le (\alpha_1+\alpha_2)/\alpha_1$ then
$\scl(w) = 1 - \frac 1 2\left(\frac 1 {\beta_1} + \frac {(\beta_1-1)} {\beta_1(\alpha_1+\alpha_2)}\right)$}
\item{Otherwise $\scl(w) = 1 - \frac 1 2\left(\frac 1 {\alpha_1} + \frac 1 {\beta_1}\right)$}
\end{enumerate}
\end{proposition}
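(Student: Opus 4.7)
The plan is to finish the maximization already set up at the end of §\ref{detailed_example_section}. That discussion reduced the calculation to
\[
\scl(w) = 1 - \tfrac{1}{2}\max_{x \in [0,1]} \bigl(\kappa_A(x) + \kappa_B(x)\bigr),
\]
with $\kappa_A$ and $\kappa_B$ given by the explicit three-piece formulas displayed there. My first step is to record the qualitative shape: each of $\kappa_A,\kappa_B$ is continuous, piecewise linear, and concave on $[0,1]$, a ``tent'' that rises linearly from $\kappa_A(0) = 0$ with slope $1/(\alpha_1+\alpha_2)$ up to its plateau value $1/\alpha_1$ on the interval $[(\alpha_1+\alpha_2)/\alpha_1,(\alpha_1-1)/\alpha_1]$, then descends linearly with slope $-1$ to $\kappa_A(1) = 0$ (and similarly for $\kappa_B$). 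Integrality of the $\alpha_i$ together with $\alpha_1 > \alpha_1+\alpha_2 > 0$ forces $\alpha_1 \ge 2$, so this plateau is a non-degenerate subinterval of $(0,1)$.

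Since $\kappa_A+\kappa_B$ is concave and vanishes at both endpoints, its maximum on $[0,1]$ is attained at any point where the left one-sided derivative is $\ge 0$ and the right one-sided derivative is $\le 0$; so the plan is just to read off the breakpoints and slopes of the sum. In Case~1 the hypothesis $(\alpha_1-1)/\alpha_1 \le (\beta_1+\beta_2)/\beta_1$ places $\kappa_A$'s plateau entirely to the left of $\kappa_B$'s plateau, and I would test the candidate $x^\ast = (\alpha_1-1)/\alpha_1$. At $x^\ast$ the left derivative of $\kappa_A$ is $0$ and its right derivative is $-1$, while $\kappa_B$ is still strictly ascending with derivative $1/(\beta_1+\beta_2)$ on both sides; since $\beta_1+\beta_2 \ge 1$ the one-sided-derivative test is satisfied, and evaluating the sum at $x^\ast$ gives $\kappa_A(x^\ast)+\kappa_B(x^\ast) = 1/\alpha_1 + (\alpha_1-1)/(\alpha_1(\beta_1+\beta_2))$, which rearranges to the formula in the statement. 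Case~2 then follows by the $A \leftrightarrow B$ symmetry of $w$.

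In Case~3 neither hypothesis holds, and I would deduce that the two plateau intervals have nonempty overlap: any $x$ in their intersection simultaneously maximizes both $\kappa_A$ and $\kappa_B$, so the sum is $1/\alpha_1+1/\beta_1$ there, and by concavity this is the global maximum. The only real bookkeeping obstacle is to confirm that the three cases really do exhaust all possibilities and are consistent where they meet; a short arithmetic check shows that Case~1 and Case~2 can hold simultaneously only when $\alpha_2=\beta_2=-1$ and $\alpha_1=\beta_1$, and in that degenerate situation all three formulas collapse to $1 - 1/\alpha_1$, so the case statement is well-posed.
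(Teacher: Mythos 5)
Your proposal is correct and follows exactly the route the paper intends: Proposition~\ref{explicit_computation_proposition} is meant to be read off directly from the displayed piecewise-linear formulas for $\kappa_A,\kappa_B$ and the identity $\scl(w)=1-\tfrac12\max_{x\in[0,1]}(\kappa_A(x)+\kappa_B(x))$, and your case analysis (locate the breakpoints, use concavity and the one-sided derivative test, then evaluate) is the elementary maximization the paper leaves implicit. One small inaccuracy, harmless to the argument: the plateau $[(\alpha_1+\alpha_2)/\alpha_1,\,(\alpha_1-1)/\alpha_1]$ is \emph{degenerate} (a single point) precisely when $\alpha_2=-1$, so it is not always a non-degenerate subinterval; since you only use $\left(\alpha_1+\alpha_2\right)/\alpha_1 \le \left(\alpha_1-1\right)/\alpha_1$ and the monotonicity of the one-sided slopes, nothing breaks.
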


\begin{remark}
The program {\tt scallop} (see \cite{scallop}) implements an algorithm 
described in \cite{Calegari_scl} \S~4.1.7--8 to compute
$\scl$ on individual chains in $B_1^H(F_2)$, and can be used to give an independent check
of Proposition~\ref{explicit_computation_proposition}.
\end{remark}

Without much more work, we can also treat chains of the form
$$w'_{\alpha,\beta} = a^{-\alpha_1-\alpha_2} + b^{-\beta_1-\beta_2} + a^{\alpha_1}b^{\beta_1}a^{\alpha_2}b^{\beta_2}$$
The cones $V_A,V_B$ are the same but now the polyhedron $Y$ is slightly different,
defined by $u_2=v_3=u_4$ and $v_2=u_3=v_4$. Hence, in terms of the variable $x$,
the function $\kappa_A$ is as before, whereas $\kappa_B$ has the form:
$$\kappa_B = \begin{cases}
x & \text{if } x \le 1/\beta_1 \\
1/\beta_1 & \text{if } 1/\beta_1 \le x \le -\beta_2/\beta_1 \\
(1-x)/(\beta_1+\beta_2) & \text{if } x \ge -\beta_2/\beta_1 \\
\end{cases}$$

Hence we have
\begin{proposition}\label{explicit_computation_proposition_2}
Let $w' = a^{-\alpha_1-\alpha_2} + b^{-\beta_1-\beta_2} + a^{\alpha_1}b^{\beta_1}a^{\alpha_2}b^{\beta_2}$
where the $\alpha_i$ and coprime and similarly for the $\beta_i$, and they satisfy
$$\alpha_1 > \alpha_1+\alpha_2 > 0 > \alpha_2, \quad \beta_1 > \beta_1+\beta_2 > 0 > \beta_2$$
We have the following formulae for $\scl(w)$ by cases:
\begin{enumerate}
\item{If $-\beta_2/\beta_1\le (\alpha_1+\alpha_2)/\alpha_1$ then
$$\scl(w') = 1 - \frac 1 2\left(\max\left(\frac 1 {\beta_1} - \frac {\beta_2} {\beta_1(\alpha_1+\alpha_2)}, \frac 1 {\alpha_1}
- \frac {\alpha_2} {\alpha_1(\beta_1+\beta_2)}\right)\right)$$}
\item{Otherwise $\scl(w') = 1 - \frac 1 2\left(\frac 1 {\alpha_1} + \frac 1 {\beta_1}\right)$}
\end{enumerate}
\end{proposition}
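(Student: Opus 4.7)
The plan is to run the template of Proposition~\ref{explicit_computation_proposition} with the new functions $\kappa_A,\kappa_B$ that the author has just recorded. Once $Y_{[w']}$ is parameterized by $x \in [0,1]$, the computation of $\scl(w')$ reduces to evaluating
\[
\scl(w') = 1 - \tfrac{1}{2}\max_{x \in [0,1]}\bigl(\kappa_A(x) + \kappa_B(x)\bigr),
\]
so everything comes down to a one-dimensional maximization of a piecewise linear concave function. Write $x_A = (\alpha_1+\alpha_2)/\alpha_1$, $y_A = (\alpha_1-1)/\alpha_1$, $x_B = 1/\beta_1$, $y_B = -\beta_2/\beta_1$ for the four breakpoints. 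The slopes of $\kappa_A$ on $[0,x_A]$, $[x_A,y_A]$, $[y_A,1]$ are $1/(\alpha_1+\alpha_2),\,0,\,-1$, and the slopes of $\kappa_B$ on $[0,x_B]$, $[x_B,y_B]$, $[y_B,1]$ are $1,\,0,\,-1/(\beta_1+\beta_2)$. The standing hypotheses force $\alpha_1,\beta_1 \ge 2$, hence $x_B \le y_A$ automatically; the inequalities $x_A \le y_A$ and $x_B \le y_B$ are obvious. Consequently the only flexibility in the ordering of the four breakpoints inside $(0,1)$ is the relative position of $y_B$ and $x_A$, which is precisely the dichotomy of the proposition.

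In case (2), where $y_B > x_A$, the two plateaus $[x_A,y_A]$ and $[x_B,y_B]$ overlap on the nonempty interval $[x_A,\min(y_A,y_B)]$, on which $\kappa_A + \kappa_B \equiv 1/\alpha_1 + 1/\beta_1$. Since neither summand can exceed its plateau value anywhere on $[0,1]$, this is the global maximum, yielding the case (2) formula immediately.

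In case (1), where $y_B \le x_A$, the plateaus are disjoint. For $x < y_B$ the slope of $\kappa_A + \kappa_B$ is strictly positive (on $[0,x_B]$ both summands have positive slope; on $[x_B,y_B]$ the slope of $\kappa_A$ is strictly positive while that of $\kappa_B$ vanishes), and symmetrically for $x > x_A$ the slope is strictly negative. On the intermediate interval $[y_B,x_A]$ the slope equals $1/(\alpha_1+\alpha_2) - 1/(\beta_1+\beta_2)$, of indeterminate sign; but by concavity the maximum must be attained at one of the two endpoints $y_B$ or $x_A$. Direct substitution using $\kappa_B(y_B) = 1/\beta_1$, $\kappa_A(y_B) = y_B/(\alpha_1+\alpha_2)$, $\kappa_A(x_A) = 1/\alpha_1$, and $\kappa_B(x_A) = (1-x_A)/(\beta_1+\beta_2)$ produces precisely the two expressions inside the maximum in the case (1) formula.

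The substantive work is entirely contained in the case split above; the only potential pitfall is the bookkeeping to check that no other vertex of the piecewise linear sum produces a larger value than the candidates identified. This is handled uniformly by the concavity of $\kappa_A+\kappa_B$ together with the verified ordering of the four breakpoints inside $(0,1)$.
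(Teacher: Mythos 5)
Your proof is correct and follows exactly the route the paper intends: the paper derives the piecewise formulas for $\kappa_A(x)$ and $\kappa_B(x)$ on $Y_{[w']} \cong [0,1]$ and then simply states the proposition, leaving the one-dimensional maximization implicit, which is precisely the computation you carry out. One small imprecision worth noting: the claim that ``the only flexibility in the ordering of the four breakpoints is the relative position of $y_B$ and $x_A$'' is slightly overstated (in case (2) the relations $x_A$ vs.\ $x_B$ and $y_A$ vs.\ $y_B$ are also undetermined), but this is harmless since in case (1) the chain $x_B \le y_B \le x_A \le y_A$ follows from $y_B\le x_A$, and in case (2) the plateau overlap $\max(x_A,x_B)\le\min(y_A,y_B)$ holds regardless of those other orderings.
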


The distribution of values of $\scl$ for all $w,w'$ with $\alpha_1,\beta_1\le 35$
(about $3$ million words) is illustrated in Figure~\ref{histogram_1_2}.

\begin{figure}[htpb]
\labellist
\small\hair 2pt
\pinlabel $0$ at 0 -15
\pinlabel $\frac{1}{2}$ at 300 -15
\pinlabel $1$ at 600 -15
\pinlabel $\frac{1}{3}$ at 200 -15
\pinlabel $\frac{2}{3}$ at 400 -15
\pinlabel $\frac{1}{4}$ at 150 -15
\pinlabel $\frac{3}{4}$ at 450 -15
\pinlabel $\frac{1}{5}$ at 120 -15
\pinlabel $\frac{2}{5}$ at 240 -15
\pinlabel $\frac{3}{5}$ at 360 -15
\pinlabel $\frac{4}{5}$ at 480 -15
\pinlabel $\frac{1}{6}$ at 100 -15
\pinlabel $\frac{5}{6}$ at 500 -15
\pinlabel $\frac{1}{7}$ at 85.7142857 -15
\pinlabel $\frac{2}{7}$ at 171.42857 -15
\pinlabel $\frac{3}{7}$ at 257.142857 -15
\pinlabel $\frac{4}{7}$ at 342.85714 -15
\pinlabel $\frac{5}{7}$ at 428.571428 -15
\pinlabel $\frac{6}{7}$ at 514.285714 -15
\endlabellist
\centering
\includegraphics[scale=0.5]{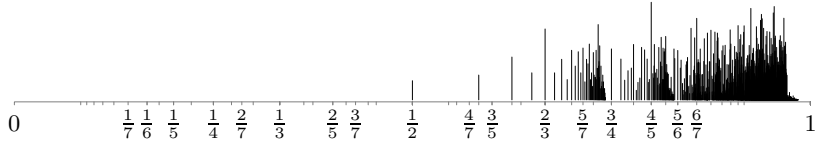}
\caption{Histogram of values of $\scl$ on $w,w'$}\label{histogram_1_2}
\end{figure}

This statement about integral chains in $F_2$ can be translated into a statement
about elements of the commutator subgroup of $F_4$. 

Let $a,b,c,d$ be generators for a free group $F_4$. For each $\alpha,\beta$
define
$$w''_{\alpha,\beta} = a^{-\alpha_1-\alpha_2}cb^{-\beta_1-\beta_2}c^{-1}da^{\alpha_1}b^{\beta_1}a^{\alpha_2}b^{\beta_2}d^{-1}$$
By the self-product formula, i.e.\/ Theorem~2.101 from \cite{Calegari_scl} (also see Remark~2.102), 
we have an equality
$$\scl_{F_4}(w''_{\alpha,\beta}) = \scl_{F_2}(w'_{\alpha,\beta}) + 1$$ 
Consequently, by the multiplicativity of $\scl$ under taking powers, we deduce the following theorem:

\begin{theorem}[Denominators]\label{denominator_theorem}
The image of a nonabelian free group of rank at least $4$ under $\scl$ in
$\R/\Z$ is precisely $\Q/\Z$.
\end{theorem}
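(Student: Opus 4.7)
The plan is to exploit Proposition~\ref{explicit_computation_proposition_2}, the self-product formula, and multiplicativity of $\scl$ under powers. One direction is immediate: $\scl$ is rational on any free group (by Theorem~\ref{rationality_theorem} applied to $F_n = *^n\Z$, or by~\cite{Calegari_pickle}), so the image of $F_n$ in $\R/\Z$ is automatically contained in $\Q/\Z$. The content is the reverse containment.

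To build the needed elements, for each integer $n\geq 2$ I would take $\alpha=(n,-(n-1))$ and $\beta=(n,-(n-1))$. A direct verification shows these satisfy the hypotheses of Proposition~\ref{explicit_computation_proposition_2}: both coordinate pairs are coprime, the size constraints $\alpha_1>\alpha_1+\alpha_2>0>\alpha_2$ (and their analogue for $\beta$) hold, and the case-splitting inequality $-\beta_2/\beta_1 = (n-1)/n > 1/n = (\alpha_1+\alpha_2)/\alpha_1$ holds strictly for $n\geq 3$ (with equality at $n=2$, where the two case formulas coincide). In every case the proposition gives
$$\scl_{F_2}(w'_{\alpha,\beta}) = 1 - \frac{1}{n},$$
and the self-product formula (Theorem~2.101 of~\cite{Calegari_scl}) then yields
$$\scl_{F_4}(w''_{\alpha,\beta}) = 2 - \frac{1}{n} = \frac{2n-1}{n}.$$

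Next, sweep by powers. Given any $r\in\Q$, write $r\equiv p/q\pmod\Z$ with $0\le p<q$ and $q\geq 2$ (the coset $0+\Z$ is realized by any commutator). Set $n=q$ and let $g:=w''_{(q,-(q-1)),(q,-(q-1))}\in[F_4,F_4]$. Multiplicativity of $\scl$ under powers gives
$$\scl_{F_4}(g^k) \;=\; k\cdot\frac{2q-1}{q} \;\equiv\; -\frac{k}{q} \pmod\Z,$$
so as $k$ ranges over $\{0,1,\dots,q-1\}$ the residue sweeps every element of $\tfrac{1}{q}\Z/\Z$; in particular $p/q+\Z$ is hit. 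To pass from rank $4$ to arbitrary rank $n\geq 4$, use that $F_4$ is a retract of $F_n$ (kill the extra generators); since $\scl$ is preserved under inclusion into a retract (a standard consequence of functoriality of $\scl$ under surjections), every value achieved in $F_4$ is inherited by $F_n$.

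The logical skeleton is clean once the earlier sections are in hand; the main delicate point is simply checking that the chosen one-parameter family $(\alpha,\beta)=((n,-(n-1)),(n,-(n-1)))$ lies honestly in case~(2) of Proposition~\ref{explicit_computation_proposition_2} and that the self-product formula is applied with its correct $+1$ shift. That integer shift is essential: without it the $n$-dependence of $\scl$ values would wash out in $\R/\Z$; with it, the residue of $\scl_{F_4}(w''_{\alpha,\beta})\bmod\Z$ equals $-1/n$, whose integer multiples run over $\tfrac{1}{n}\Z/\Z$ and thus cover every reciprocal coset, which is precisely what is needed to hit all of $\Q/\Z$.
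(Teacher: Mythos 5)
Your proof is correct and follows the same route the paper sketches: pick a one-parameter family in Proposition~\ref{explicit_computation_proposition_2} realizing $\scl_{F_2}(w'_{\alpha,\beta}) = 1 - \frac{1}{n}$, pass to a single element $w''_{\alpha,\beta}\in[F_4,F_4]$ via the self-product formula (picking up the $+1$ shift), and sweep residues by multiplicativity under powers, with the retraction $F_n \to F_4$ handling higher rank. One trivial slip: a nontrivial commutator in a free group has $\scl = \frac{1}{2}$, so it does \emph{not} realize the coset $0+\Z$; but your $k=0$ case already handles that, so the argument stands.
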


If $F_1$, $F_2$ are free groups containing elements $w_1$ and $w_2$ respectively, then 
by the free product formula, i.e.\/ Theorem~2.93 from \cite{Calegari_scl}, we have an equality
$\scl(w_1w_2) = \scl(w_1) + \scl(w_2) + 1/2$ where the product on the left hand side is taken in the
free group $F_1 * F_2$. Suppose $w_i$ is an infinite family of elements in $F$ for which the set of 
numbers $\scl(w_i)$ is well-ordered with ordinal type $\omega$. Then we can take two copies 
$F_1,F_2$ of $F$, and corresponding elements $w_{i,1},w_{i,2}$
in each copy, and observe that the set of numbers $\scl(w_{i,1}w_{j,2})$ is well-ordered with ordinal 
type $\omega^2$. Repeating this process inductively, we deduce the following theorem:

\begin{theorem}[Limit values]
For each $n$, the image of the free group $F_n$ under $\scl$ contains a well-ordered sequence of values
with ordinal type $\omega^{\lfloor n/4 \rfloor}$. 
The image of $F_\infty$ under $\scl$ contains a well-ordered sequence of values with 
ordinal type $\omega^\omega$.
\end{theorem}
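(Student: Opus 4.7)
The plan is to construct, for each $k$, a well-ordered subset of order type $\omega^k$ in the image of $\scl$ on $F_{4k}$ by taking products of a single base-case family across the $k$ free factors of $F_{4k} \cong F_4 * \cdots * F_4$.

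For the base case, I take the element $u_\alpha \in F_4$ obtained from the chain $w'_{\alpha,-1,2,-1}$ of Proposition~\ref{explicit_computation_proposition_2} via the self-product construction $w' \mapsto w''$: one checks that case~(1) of that proposition applies for every $\alpha \geq 2$, and a short computation yields $\scl_{F_4}(u_\alpha) = \tfrac{7}{4} - \tfrac{1}{4(\alpha-1)}$, a strictly increasing sequence with limit $7/4$. For general $k$, let $u^{(i)}_\alpha$ denote a copy of $u_\alpha$ sitting in the $i$-th free factor, and set $W(\vec\alpha) := u^{(1)}_{\alpha_1} u^{(2)}_{\alpha_2} \cdots u^{(k)}_{\alpha_k} \in F_{4k}$. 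Iterating the free product formula (\cite{Calegari_scl}, Theorem~2.93) gives
\[
\scl(W(\vec\alpha)) \;=\; C_k - \tfrac{1}{4}\sum_{i=1}^{k} \tfrac{1}{\alpha_i-1}, \qquad C_k := \tfrac{9k-2}{4}.
\]

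The main obstacle is arranging well-ordering of these values in $\R$: the Minkowski sum of $k$ copies of $\{1/(\alpha-1):\alpha\geq 2\}$ is dense in $[0,k]$ and is very far from being well-ordered. I would resolve this by restricting to the geometric sub-lattice $\alpha_i = 1 + N^{j_i}$ with $0 \leq j_1 \leq j_2 \leq \cdots \leq j_k$, for a fixed integer $N \geq k+1$. For each fixed value of $j_1$, the resulting $\scl$-values lie in the half-open interval $[C_k - \tfrac{k}{4N^{j_1}},\, C_k - \tfrac{1}{4N^{j_1}})$; the inequality $N > k$ forces $\tfrac{k}{4N^{j_1+1}} < \tfrac{1}{4N^{j_1}}$, so the blocks indexed by $j_1$ are pairwise disjoint and arranged in strictly increasing order in $\R$. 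Applying the same argument inductively within each block, with the remaining $k-1$ coordinates now ranging over $\{j_1, j_1+1, \ldots\}$, shows each block is well-ordered of order type $\omega^{k-1}$, and concatenating gives total order type $\omega^{k-1}\cdot \omega = \omega^k$.

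By Corollary~\ref{injective_is_isometry} this well-ordered sequence embeds isometrically into every $F_n$ with $n \geq 4k$, proving the first statement of the theorem. For $F_\infty$, I would replace $W(\vec\alpha)$ by $W(\vec\alpha)^{n_k}$ for a sufficiently rapidly growing sequence of positive integers $n_k$ (any $n_k \geq 10^k$ will do); using $\scl(g^n) = n\cdot\scl(g)$, the rescaled $k$-th family has $\scl$-values in $\bigl[n_k\tfrac{4k-1}{2},\, n_k\tfrac{9k-2}{4}\bigr)$, and these intervals can be made disjoint and strictly increasing in $k$. Their union is then a well-ordered subset of the image of $\scl$ on $F_\infty$ of order type $\omega + \omega^2 + \omega^3 + \cdots = \omega^\omega$.
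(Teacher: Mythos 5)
Your proof goes through, but one intermediate claim motivating it is false, and the extra work it prompts is in fact unnecessary. You assert that the Minkowski sum of $k$ copies of $\{1/(\alpha-1):\alpha\geq 2\}$ is dense in $[0,k]$ and ``very far from being well-ordered.'' Neither is true: this sum is a countable set whose closure is exactly the Minkowski sum of $k$ copies of $\{1/n : n\geq 1\}\cup\{0\}$, which is far from dense, and more importantly, the set of values $\scl(W(\vec\alpha)) = C_k - \tfrac14\sum 1/(\alpha_i-1)$ is already well-ordered of order type $\omega^k$ with no restriction on the $\alpha_i$. (This is the classical fact that the Minkowski sum of well-ordered bounded subsets of $\R$ is well-ordered, with order type bounded by the Hessenberg natural product; for two copies of an $\omega$-sequence converging to a finite limit one gets exactly $\omega^2$, and the argument is a short pigeonhole: if $s_{n_j}+s_{m_j}$ were strictly decreasing with $n_j\leq m_j$, then $n_j$ would be bounded, a subsequence would have constant $n_j$, and then $m_j$ would have to strictly decrease.) This unstated fact is precisely what the paper is leaning on when it says that passing from $\scl(w_i)$ to $\scl(w_{i,1}w_{j,2})$ via the free product formula $\scl(w_1w_2)=\scl(w_1)+\scl(w_2)+1/2$ produces a well-ordered set of type $\omega^2$, and then iterates.

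That said, your restriction to the geometric sub-lattice $\alpha_i = 1+N^{j_i}$, $j_1\leq\cdots\leq j_k$, with $N>k$ is perfectly legitimate: it exhibits a well-ordered \emph{subset} of the image of $\scl$, the block-disjointness calculation is correct, and the induction does yield order type $\omega^k$. The effect is to replace an appeal to the general Minkowski-sum fact by an explicit, self-verifying decomposition, which some readers may find clearer. Your base-case computation $\scl_{F_4}(u_\alpha)=\tfrac74-\tfrac1{4(\alpha-1)}$ from Proposition~\ref{explicit_computation_proposition_2} is right, as is the iteration giving $C_k=(9k-2)/4$. Your treatment of $F_\infty$ via powers $W(\vec\alpha)^{n_k}$ with $n_k$ growing geometrically is a genuine improvement in explicitness: the paper just says ``repeating this process inductively,'' but the intervals $[C_k-k/4,\,C_k)$ do eventually overlap as $k$ grows, so simply concatenating the $F_{4k}$-families inside $F_\infty$ is not automatic, and your rescaling (using homogeneity $\scl(g^n)=n\,\scl(g)$) is exactly the right fix. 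One small cosmetic point: invoking Corollary~\ref{injective_is_isometry} for the inclusion $F_{4k}\hookrightarrow F_n$ is slightly off, since that corollary concerns homomorphisms between free products with the same number of factors; the cleaner justification is that $F_{4k}$ is a retract of $F_n$ and $\scl$ is monotone under homomorphisms (in both directions along a retraction), hence isometric.
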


To obtain stronger results, it is necessary to understand how $\chi$ varies as a function of
the parameters in a more general surgery family. 

\subsection{Faces and signatures}

We recall the method to compute $\scl(w)$ described in \S~\ref{free_product_section} for
a chain $w \in B_1^H(A*B)$. In broad outline, the method has three steps:
\begin{enumerate}
\item{Construct the polyhedra $V_A$ and $V_B$ and $Y_w \subset V_A \times V_B$}
\item{Express $\chi$ as the minimum of a finite family of rational linear functions}
\item{Maximize $\chi$ on $Y_w$}
\end{enumerate}
In principle, step (1) is elementary linear algebra. However in practice, even for
simple $w$ the polyhedra $V_A,V_B,Y_w$ become difficult to work with directly,
and it is useful to have a description of these polyhedra which is as simple
as possible; we take this up in \S~\ref{combinatorics_subsection}.

Given $Y_w$ and $\chi$, step (3) is a straightforward linear programming problem, which
may be solved by any number of standard methods (e.g.\/ Dantzig's simplex
method \cite{Dantzig}, Karmarkar's projective method \cite{Karmarkar} and so on). These
methods are generally very rapid and practical.

The ``answers'' to steps (1) and (3) depend piecewise rationally linearly on the parameters of
the problem, and it is easy to see their contribution to $\scl$ on families obtained by a line
of surgeries.

The most difficult step, and the
most interesting, is (2): obtaining an explicit description of $\chi$ as a function of a
parameter $p$ in a line of surgeries. Because of Proposition~\ref{norm_is_linear}, this
amounts to the determination of the respective Klein functions $\kappa$ 
on each of $V_A$ and $V_B$. This turns out to be a very difficult question to answer precisely,
but we are able to obtain some qualitative results.

\medskip

For a given combinatorial type of $V_A$, it takes a finite amount of data to specify the set
of open faces with connected support (i.e.\/ those faces with the property that the integer lattice
points they contain are disk vectors). We call this data that {\em signature} of $V_A$, and
denote in $\sign(V_A)$. Evidently the sail of $V_A$ depends only on $\sign(V_A)$ (a finite amount
of data), and the orbit of $V_A$ under $\GL(C_2(A),\Z)$.

\subsection{Combinatorics of $V_A$}\label{combinatorics_subsection}

In this section we will give an explicit description of 
$V_A$ as a polyhedron depending on $w$. Recall
that $V_A$ is the set of non-negative vectors in $C_2(A)$ in the kernel
of both $\partial$ and $h$. 
Define $W_A$ to be the set of non-negative vectors in $C_2(A)$ in the kernel
of $\partial$. Hence $V_A = W_A \cap \ker(h)$.
We first give an explicit description of $W_A$.

\smallskip

Let $\Sigma$ denote the directed graph with vertex set $T(A)$ and edge set
$T_2(A)$.
Non-negative vectors in $C_2(A)$ correspond to simplicial $1$-chains,
whose simplices are all oriented compatibly with the orientation on the
edges of $\Sigma$. A vector is in the kernel of $\partial$ if and only if
the corresponding chain is a $1$-cycle. Hence we can think of $W_A$
as a rational convex polyhedral cone in the real vector space $H_1(\Sigma)$.

A $1$-cycle in $H_1(\Sigma)$ is determined by the degree with which
it maps over every oriented edge of $\Sigma$. A $1$-cycle $\phi$ in $W_A$
determines an oriented subgraph $\Sigma(\phi)$ of $\Sigma$ which is the union
of edges over which it maps with strictly positive degree.

\begin{lemma}\label{faces_recurrent}
An oriented subgraph of $\Sigma$ is of the form $\Sigma(\phi)$ for
some $\phi \in W_A$ if and only if every component is {\em recurrent}; i.e.\/ it contains
an oriented path from every vertex to every other vertex.
\end{lemma}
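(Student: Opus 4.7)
The plan is to identify $W_A$ with the cone of non-negative $1$-cycles on the directed graph $\Sigma$, and then deduce the characterization of supports purely combinatorially. Concretely, the relation $\partial v = 0$ for $v = \sum v_e \cdot e$ unravels to weighted flow conservation at every vertex of $\Sigma$: the sum of $v_e$ over outgoing edges $e$ at a vertex equals the sum over incoming edges. Supports correspond to subsets of edges of $\Sigma$ on which some such non-negative circulation is strictly positive.

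For the ``only if'' direction I would fix $\phi \in W_A$, set $G = \Sigma(\phi)$, and show that each (weak) component $C$ of $G$ is strongly connected. Pick $v \in C$ and let $R(v) \subseteq C$ denote the set of vertices reachable from $v$ via a directed path in $G$. By construction no $G$-edge has source in $R(v)$ and target outside $R(v)$, so the total $\phi$-flow leaving $R(v)$ is zero; summing the balance equation $\partial \phi = 0$ over the vertices of $R(v)$ then forces the total $\phi$-flow entering $R(v)$ to vanish as well. Hence no $G$-edge joins $R(v)$ to its complement in either direction, and since $C$ is weakly connected with $v \in R(v)$, this forces $R(v) = C$.

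For the converse, given an oriented subgraph $G$ whose every component is strongly connected, for each edge $e = (\tau, \tau')$ of $G$ strong connectivity provides a directed path in $G$ from $\tau'$ back to $\tau$; concatenating with $e$ produces a directed cycle $C_e \subseteq G$ through $e$. Its characteristic function $\chi_{C_e}$ is a non-negative element of $\ker \partial$, and the sum $\phi := \sum_{e \in G} \chi_{C_e}$ is again non-negative, lies in $W_A$, and has coefficient at least $1$ on every edge of $G$, so $\Sigma(\phi) = G$.

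The argument is really just flow conservation together with the standard ``cycle decomposition'' of integer circulations on a directed graph, so I don't expect a substantive obstacle; the one point requiring mild care is keeping the weak component $C$ of $G$ cleanly separated from the a~priori smaller reachable set $R(v)$ when applying the balance equation on $R(v)$ in the necessity direction.
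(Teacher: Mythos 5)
Your proposal is correct and follows essentially the same route as the paper. The ``only if'' direction is the paper's dead-end/flux argument, phrased with the forward-reachable set $R(v)$ in place of the paper's partition $(Z_1,Z_2)$; the ``if'' direction is verbatim the paper's construction of a cycle through each edge by concatenating with a return path and summing.
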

\begin{proof}
For simplicity restrict attention to one component.
A connected oriented graph is recurrent if and if it contains no {\em dead ends}: i.e.\/
partitions of the vertices of $\Sigma$ into nonempty subsets $Z_1,Z_2$
such that every edge from $Z_1$ to $Z_2$ is oriented positively. Since
$\phi$ is a cycle, the flux through every vertex is zero. If there were a
dead end $Z_1,Z_2$ the flux through $Z_2$ would be positive, which is absurd.
Hence $\Sigma(\phi)$ is recurrent.

Conversely, suppose $\Gamma$ is recurrent. For each oriented edge $e$ in $\Gamma$,
choose an oriented path from the endpoint to the initial point of $e$ and concatenate
it with $e$ to make an oriented loop. 
The sum of these oriented loops is a $1$-cycle $\phi$ for which
$\Sigma(\phi) = \Gamma$.
\end{proof}

Lemma~\ref{faces_recurrent} implies that the faces of $W_A$ are in bijection with
the recurrent subgraphs $\Gamma$ of $\Sigma$. The dimension of the face corresponding
to a graph $\Gamma$ is $\dim(H_1(\Gamma))$.
As a special case, we obtain the following:

\begin{lemma}\label{rays_loops}
The extremal rays of $W_A$ are in bijection with oriented embedded loops
in $\Sigma$.
\end{lemma}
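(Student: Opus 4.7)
The plan is to combine Lemma~\ref{faces_recurrent} with the dimension formula: faces of $W_A$ correspond to recurrent subgraphs $\Gamma \subset \Sigma$, and the face for $\Gamma$ has dimension $\dim H_1(\Gamma)$ (viewing $\Gamma$ as an undirected graph). Extremal rays are precisely the $1$-dimensional faces, so I just need to classify recurrent subgraphs $\Gamma$ with $\dim H_1(\Gamma) = 1$ and check that these are exactly the oriented embedded loops in $\Sigma$.

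First I would reduce to the connected case. Since $H_1$ of a disjoint union is the direct sum of the $H_1$'s of the components, and every component of a recurrent subgraph is itself recurrent, having $\dim H_1(\Gamma) = 1$ means either there is exactly one component with $b_1 = 1$ (the others having $b_1 = 0$), or after we discard isolated vertices (which do not appear in $\Sigma(\phi)$ by construction), there is a single component of $b_1 = 1$. A connected recurrent graph with $b_1 = 0$ and at least one edge is a tree, but a tree has a leaf, and a leaf is a dead end (violating recurrence), so no such component is possible. Hence $\Gamma$ is connected with $b_1(\Gamma) = 1$.

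Next I would argue that a connected graph with $b_1 = 1$ is an embedded cycle possibly with trees hanging off; the same leaf argument rules out any hanging trees, so the underlying undirected graph of $\Gamma$ is a simple cycle. For $\Gamma$ to be recurrent as a directed graph, every vertex must reach every other by directed paths, which on a cycle forces all edges to be oriented consistently in one direction around the cycle. Thus $\Gamma$ is an oriented embedded loop in $\Sigma$. Conversely any oriented embedded loop is recurrent and has $b_1 = 1$, so the correspondence of Lemma~\ref{faces_recurrent} restricts to the desired bijection.

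The main obstacle is really just the bookkeeping about isolated vertices and the precise meaning of ``embedded,'' since our vectors $\phi$ live in $C_2(A)$ and can in principle be scaled arbitrarily; but once one notes that the map $\phi \mapsto \Sigma(\phi)$ is constant on the relative interior of each face (Lemma~\ref{faces_recurrent}), the extremal ray is determined by its directed graph, and the characteristic vector of an oriented embedded loop (weight $1$ on each of its edges) is the natural generator of the corresponding ray.
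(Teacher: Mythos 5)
Your proof is correct and takes the same route the paper does: the paper derives this lemma immediately from the observation (following Lemma~\ref{faces_recurrent}) that faces of $W_A$ correspond to recurrent subgraphs $\Gamma$ with dimension $\dim H_1(\Gamma)$, so extremal rays are the recurrent subgraphs with $b_1 = 1$, which are exactly the oriented embedded loops. You simply spell out the small combinatorial details the paper leaves implicit — that a recurrent component cannot be a tree (leaf $\Rightarrow$ dead end), that $b_1 = 1$ plus recurrence forces a single consistently-oriented simple cycle — and these are all sound.
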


\begin{example}
Given a graph $\Gamma$ (directed or not), there is a natural graph $O(\Gamma)$ whose vertices are
embedded oriented loops in $\Gamma$, and whose edges are pairs of oriented loops whose 
union has $\dim(H_1)=2$. In the case that $\Gamma$ is the $1$-skeleton of a tetrahedron, 
the graph $O(\Gamma)$ is the $1$-skeleton of a stellated cube; see Figure~\ref{chambers}.

\begin{figure}[htpb]
\labellist
\small\hair 2pt
\endlabellist
\centering
\includegraphics[scale=0.5]{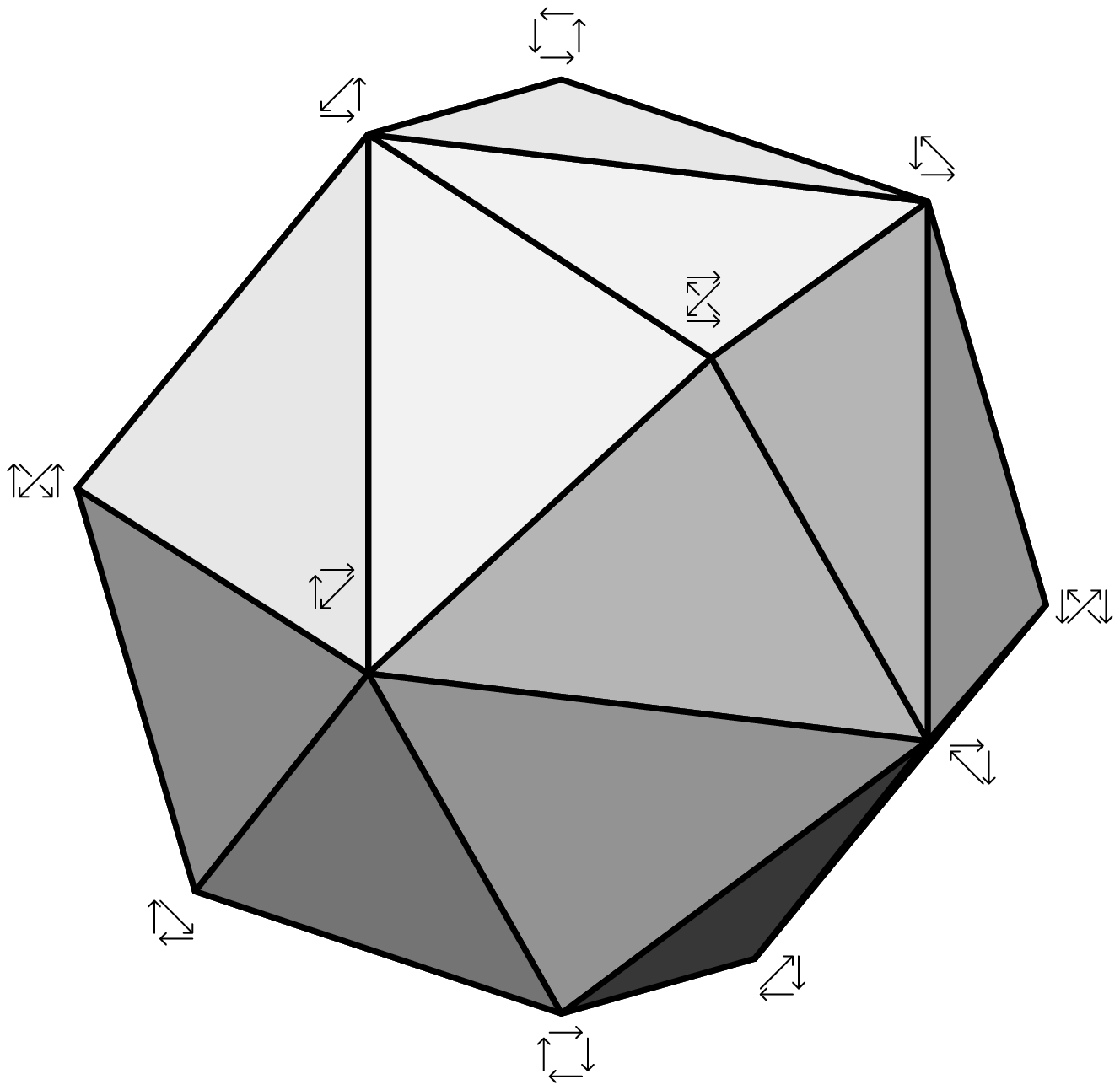}
\caption{}\label{chambers}
\end{figure}
\end{example}

The polyhedron $W_A$ depends only weakly on the precise form of $w$.
In fact, discounting Abelian loops, the polyhedron $W_A$ only depends on the cardinality of
$T(A)$. To describe $V_A$ we need to consider the function $h:C_2(A) \to A$. Recall that
$h(\tau,\tau')=(\tau+\tau')/2$ where we identify elements of $T(A)$ with elements
of $A$ by thinking of the $T(A)$ as loops in a $K(A,1)$. Denote this
identification map by $i:T(A) \to A$ and think of $i$ as a function on the 
vertices of $\Sigma$, so that if $\phi$ is an embedded loop in $\Sigma$, then
$h(\phi) = \sum_{v \in \phi}i(v) \in A$. Since by hypothesis $w \in B_1^H(F)$, we have
$h(\phi)=0$ whenever $\phi$
is a Hamiltonian circuit (an embedded loop which passes through each vertex exactly
once). Moreover for {\em generic} $w \in B_1^H(F)$ and generic $i$,
these are the {\em only} embedded loops with $h=0$.
In any case, we obtain a concrete description of $V_A$, or, equivalently,
of the set of extremal rays.

\begin{lemma}\label{rays_of_V}
Rays of $V_A$ are in the projective class of two kinds of $1$-cycles:
\begin{enumerate}
\item{embedded oriented loops $\phi$ in $\Sigma$ with $h(\phi)=0$ 
(which includes the Hamiltonian circuits in $\Sigma$)}
\item{those of the form $h(\phi')\phi - h(\phi)\phi'$ where
$\phi,\phi'$ are distinct embedded oriented loops whose intersection is connected (and possibly
empty), with $h(\phi')>0$ and $h(\phi)<0$}
\end{enumerate}
\end{lemma}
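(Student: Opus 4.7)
The plan is to identify the extremal rays of $V_A = W_A \cap \ker(h)$ using the description of $W_A$ from Lemmas~\ref{faces_recurrent} and~\ref{rays_loops}. A nonzero $v \in V_A$ generates an extremal ray iff it admits no nontrivial decomposition $v = v_1 + v_2$ with $v_1, v_2 \in V_A$ nonproportional to $v$. By Lemma~\ref{rays_loops}, any $v \in W_A$ can be written as $v = \sum_i t_i \phi_i$ with $t_i > 0$ and $\phi_i$ distinct embedded oriented loops in $\Sigma$; I will argue that $v$ extremal in $V_A$ forces the number of loops in this decomposition to be at most two, matching cases~(1) and~(2).

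For the $1$-loop case: $v = t\phi$ lies in $V_A$ iff $h(\phi) = 0$, yielding case~(1). Hamiltonian circuits automatically satisfy $h(\phi) = \sum_{v \in T(A)} i(v) = 0$ since $w \in B_1^H$.

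For the $2$-loop case: $v = t_1\phi_1 + t_2\phi_2$ lies in $V_A$ iff $t_1 h(\phi_1) + t_2 h(\phi_2) = 0$. The nondegenerate subcase is $h(\phi_1), h(\phi_2)$ strictly opposite in sign, and then $v$ is proportional to $h(\phi_2)\phi_1 - h(\phi_1)\phi_2$. Whether this combination is extremal in $V_A$ depends on whether $\phi_1 \cup \phi_2$ supports any additional independent $1$-cycle: a direct Euler characteristic (or Mayer--Vietoris) calculation gives $\dim H_1(\phi_1 \cup \phi_2) = 2$ iff $\phi_1 \cap \phi_2$ has at most one component. When the intersection is connected (possibly empty), $\phi_1 \cup \phi_2$ is the support of a $2$-dimensional face of $W_A$ and no further decomposition is possible, giving case~(2). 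When the intersection is disconnected, the corresponding face of $W_A$ has dimension at least three and the extra cycle(s) allow a nontrivial splitting, so $v$ is not extremal.

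The main obstacle I anticipate is verifying that combinations involving $\geq 3$ loops are never extremal. The argument I plan: given $v = \sum t_i \phi_i$ with $t_i > 0$ over $\geq 3$ loops and $\sum t_i h(\phi_i) = 0$, partition the loops into $I^+ = \{i : h(\phi_i) > 0\}$ and $I^- = \{i : h(\phi_i) < 0\}$ (the case of any $h(\phi_i) = 0$ immediately produces a case~(1) splitting). Setting $M = \sum_{j \in I^+} t_j h(\phi_j) > 0$ and $c_{ij} = t_i t_j / M$ for $(i,j) \in I^- \times I^+$, a short computation shows
$$v = \sum_{(i,j) \in I^- \times I^+} c_{ij} \bigl( h(\phi_j)\phi_i - h(\phi_i)\phi_j \bigr),$$
and each summand lies in $V_A$ (it has nonnegative coordinates and $h$-image zero). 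As soon as $|I^+| + |I^-| \geq 3$, the sum has at least two nonproportional summands, so $v$ fails to be extremal. Combined with the $1$- and $2$-loop analysis this shows that cases~(1) and~(2) exhaust the extremal rays of $V_A$ up to positive scaling.
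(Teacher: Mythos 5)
Your proof is correct and takes a more hands-on route than the paper. The paper's proof rests on a single structural fact about polyhedral cones: the extremal rays of $W_A \cap \ker(h)$ (for $\ker(h)$ of codimension one) are exactly the rays of $W_A$ lying in $\ker(h)$ together with the transversal slices of the $2$-dimensional faces of $W_A$; the paper then reads off the answer from the identification of faces of $W_A$ with recurrent subgraphs (Lemma~\ref{faces_recurrent}) and a Mayer--Vietoris count to see that $2$-dimensional faces correspond to pairs of loops with connected intersection. You instead prove the needed consequence directly by exhibiting, for any $v \in V_A$ written as a positive combination of $\geq 3$ distinct loops with all $h(\phi_i) \neq 0$, an explicit two-or-more-term decomposition
$$v = \sum_{(i,j) \in I^- \times I^+} \frac{t_i t_j}{M}\bigl(h(\phi_j)\phi_i - h(\phi_i)\phi_j\bigr)$$
into elements of $V_A$ (the coefficient check is a short computation, as you note), and showing that a summand with $h(\phi_i)=0$ also splits off. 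What you gain is an elementary, self-contained argument that avoids invoking the general principle about cone--hyperplane intersections; what you lose is some efficiency, and you still implicitly lean on the same Mayer--Vietoris dimension count to settle the two-loop case (connected versus disconnected intersection). Two points worth tightening if you write this up fully: (a) a positive-combination representation of $v$ in terms of loops is not unique, so phrase the claim as ``\emph{some} decomposition of an extremal $v$ uses at most two loops'' rather than ``the'' decomposition; (b) after producing the decomposition with $\geq 2$ summands you should say a word about why not all the summands $h(\phi_j)\phi_i - h(\phi_i)\phi_j$ can be proportional to $v$ (they are supported on only two loops while $v$ has positive coefficient on every edge of $\bigcup_k \phi_k$, and no embedded loop is contained in the union of two others with matching orientations without forcing a coincidence of loops), since distinct loops in $C_2(A)$ need not be linearly independent.
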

\begin{proof}
The rays of $V_A$ are the intersection of the hyperplane $h=0$ with the rays
and $2$-dimensional faces of $W_A$. The rays of $W_A$ which satisfy $h=0$ are exactly
the projective classes of the oriented loops $\phi$ with $h(\phi)=0$. 
The $2$-dimensional faces of $W_A$
correspond to the recurrent subgraphs $\Gamma$ with
$\dim(H_1(\Gamma))=2$. By Mayer-Vietoris, such a $\Gamma$ is the union of a
pair of embedded loops $\phi,\phi'$
whose (possibly empty) intersection is connected. The hyperplane $h=0$ intersects such a face in
a ray in the projective class of $h(\phi')\phi - h(\phi)\phi'$.
\end{proof}

\subsection{Surgery theorem}\label{surgery_subsection}

In what follows, we fix $A*B$ and a linear family of surjective homomorphisms (i.e.\/ a line of surgeries)
$\rho_p:A*B \to A' * B$ where $A,A',B$ are free
Abelian, and $\rank(A') = \rank(A)-1$. Fix $w \in B_1^H(A*B)$ and denote $w(p):=\rho_p(w)$.

\medskip

Recall that the set of disk vectors $\D_A$ in $V_A$ is the union of the integer lattice points in those
open faces $F$ of $V_A$ for which $\supp(F)$ is connected. In a line of surgeries, the polyhedra
$V_A(p)$ vary in easily understood ways. For each $p$, let $M(p)$ be an integral matrix whose columns are 
vectors spanning the extremal rays of $V_A(p)$. Then $M(p)$ has the form
$M(p) = N + pN'$, where $N$ and $N'$ are fixed integral matrices, depending only on $w$. 
As $p \to \infty$, the cones $V_A(p)$ converge
in the Hausdorff topology to a rational cone $V_A'(\infty)$ spanned by the nonzero columns of $N'$,
and the columns of $N$ corresponding to the zero columns of $N'$. The cone $V_A(\infty)$ associated
to $w$ has codimension one in each $V_A(p)$, and codimension one in the limit $V_A'(\infty)$.

For each $p$, let $\D_A(p)$ denote the disk vectors in $V_A(p)$, and $\D_A(\infty)$ the disk
vectors in $V_A(\infty)$. Similarly, let $\kappa_p$ denote the Klein function on $V_A(p)$, and
$\kappa_\infty$ the Klein function on $V_A(\infty)$. Observe that any $v$ that is in $\D_A(p)$ 
for some $p$ is also in $\D_A(q)$ for all $q$ such that $v$ is in $V_A(q)$ 
(i.e.\/ the property of being a disk vector does not depend on $p$).

\begin{lemma}\label{disks_in_subspace_lemma}
There is convergence in the Hausdorff topology
$$\conv(\D_A(p)+V_A(p)) \to \conv(\D_A(\infty) + V_A'(\infty))$$
Hence $\kappa_\infty = \lim_{p \to \infty} \kappa_p |_{V_A(\infty)}$.
\end{lemma}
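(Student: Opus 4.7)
The plan is to exploit the explicit affine structure $M(p) = N + pN'$ together with the intrinsic combinatorial nature of the disk vector condition. The argument splits into Hausdorff convergence of the cones, convergence of the Klein polyhedra, and an easy restriction argument for the Klein functions themselves.

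First I would establish Hausdorff convergence $V_A(p) \to V_A'(\infty)$ on the unit sphere of directions. The extremal rays of $V_A(p)$ are spanned by the columns of $M(p) = N + pN'$: after rescaling by $1/p$, those with nonzero $N'$-column converge projectively to the columns of $N'$, and the remaining rays (with zero $N'$-column) are fixed, which is exactly the description of $V_A'(\infty)$. I would also verify $\bigcap_p V_A(p) = V_A(\infty)$: writing $\rho_p = \sigma + p\tau$, surjectivity of $\rho_p$ together with the rank drop $\rank(A') = \rank(A)-1$ forces $\ker(\sigma) \cap \ker(\tau) = 0$, so the linear-in-$p$ equation $\sigma(h(v)) + p\tau(h(v)) = 0$ holding for all $p$ forces $h(v) = 0$.

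Next I would use the intrinsic support characterization (emphasized in the paragraph preceding the lemma) to get $\D_A(\infty) \subset \D_A(p)$ for every $p$, since $V_A(\infty) \subset V_A(p)$ and whether a vector is a disk vector depends only on the connectedness of its support graph. Combined with the recession cone convergence this yields
$$\conv(\D_A(\infty) + V_A'(\infty)) \subset \liminf_{p \to \infty} \conv(\D_A(p) + V_A(p)).$$
For the reverse inclusion I would invoke Lemma~\ref{boundary_is_finite_polyhedron}: each $\conv(\D_A(p) + V_A(p))$ is a finite-sided polyhedron with vertices in $\D_A(p)$, and since the combinatorial type of $V_A(p)$ stabilizes for large $p$, these vertices can be tracked by a fixed finite index set. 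A bounded vertex sequence in the integer lattice is eventually constant at a lattice point $v_\infty$ lying in $V_A(p)$ for all large $p$, hence $v_\infty \in V_A(\infty)$ by the above, and $v_\infty \in \D_A(\infty)$ by the intrinsic support condition. An unbounded sequence, after projective normalization, has limit direction in $V_A'(\infty)$, absorbed into the recession cone of the limit polyhedron.

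The Klein function convergence $\kappa_\infty = \lim \kappa_p|_{V_A(\infty)}$ then follows from Lemma~\ref{klein_is_boundary}, which identifies the level set $\kappa = 1$ with the boundary of the Klein polyhedron. Hausdorff convergence gives pointwise convergence of the linear-on-rays Klein functions, with limit $\sup\{t : v/t \in \conv(\D_A(\infty) + V_A'(\infty))\}$ for $v \in V_A(\infty)$. A brief check identifies this with $\kappa_\infty(v)$: given an expression $v/t = \sum \lambda_i d_i + w$ with $d_i \in \D_A(\infty)$ and $w \in V_A'(\infty)$, the element $w = v/t - \sum \lambda_i d_i$ satisfies $h(w) = h(v/t) - \sum \lambda_i h(d_i) = 0$ since $v \in V_A(\infty)$ and each $d_i \in V_A(\infty)$, so $w \in V_A(\infty)$ and the expression is admissible for $\kappa_\infty$. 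The main obstacle I anticipate lies in making the vertex-tracking in the reverse inclusion truly rigorous — ensuring that escaping vertex sequences contribute only directions in $V_A'(\infty)$ and that bounded sequences stabilize at elements of $\D_A(\infty)$. This is where the combinatorial stabilization of the face structure of $V_A(p)$ does genuine work and must be balanced carefully against the affine dependence of $M(p)$.
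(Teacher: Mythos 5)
Your argument follows essentially the same route as the paper's: use discreteness of the integer lattice to show that $\D_A(p)$ stabilizes to $\D_A(\infty)=\bigcap_p\D_A(p)$ on compact sets, deduce Hausdorff convergence of the Klein polyhedra, and then recover $\kappa_\infty$ from the intersection formula $\conv(\D_A(\infty)+V_A'(\infty))\cap V_A(\infty)=\conv(\D_A(\infty)+V_A(\infty))$, whose nontrivial inclusion you verify correctly via $h(w)=h(v/t)-\sum\lambda_i h(d_i)=0$. The paper's proof is four sentences; yours fills in the $\liminf/\limsup$ dichotomy and the verification of $\bigcap_pV_A(p)=V_A(\infty)$, but the skeleton is identical.

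One sentence in your reverse-inclusion argument is off: the claim that because the combinatorial type of $V_A(p)$ stabilizes, the vertices of $\conv(\D_A(p)+V_A(p))$ ``can be tracked by a fixed finite index set.'' The vertices of the Klein polyhedron are a subset of $\D_A(p)$, and their number is \emph{not} controlled by the combinatorial type of $V_A(p)$ --- it can grow without bound as $p\to\infty$ (already in the continued-fraction picture in dimension two). Fortunately your subsequent argument does not actually use this: the bounded/unbounded dichotomy you apply works for \emph{any} sequence of points $x_p\in\conv(\D_A(p)+V_A(p))$ converging to $x$, since non-negativity of everything in sight bounds $\lambda_i^{(p)}d_i^{(p)}\le x_p$ componentwise, forcing each $d_i^{(p)}$ either to be eventually constant (hence a lattice point in $\bigcap_pV_A(p)=V_A(\infty)$, hence in $\D_A(\infty)$) or to have coefficient $\lambda_i^{(p)}\to 0$, with the residue absorbed into $V_A'(\infty)$. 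You should drop the vertex-tracking framing and just run that compactness argument directly; it is cleaner and actually justified by Carath\'eodory plus non-negativity rather than by a stabilization claim that is false as stated.
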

\begin{proof}
The set of integer lattice points is discrete. Since every integer lattice point is either in every
$V_A(p)$ or in only finitely many, the intersection of $\D_A(p)$ with any compact subset of $W_A$
is eventually equal to the intersection of this compact set with $\cap_p \D_A(p)$.
Since $V_A(\infty) = \cap_p V_A(p)$, we have $\D_A(\infty) = \cap_p \D_A(p)$.

The last claim follows because 
$$\conv(\D_A(\infty)+V_A'(\infty)) \cap V_A(\infty) = \conv(\D_A(\infty) + V_A(\infty))$$
\end{proof}

From this discussion we derive the following theorem.

\begin{theorem}[Surgery]\label{surgery_limit_theorem}
Fix $w \in B_1^H(A*B)$ and let $\rho_p:A*B \to A'*B$ be a line of surgeries, constant on the
second factor, and surjective on the first factor with $\rank(A') = \rank(A)-1$.
Define $w(p)=\rho_p(w)$.
Then $\lim_{p \to \infty} \scl(w(p)) = \scl(w)$.
\end{theorem}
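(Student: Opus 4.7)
The plan is to reduce everything to the linear programming formulation from Theorem~\ref{rationality_theorem} and then apply the convergence Lemma~\ref{disks_in_subspace_lemma}. For each $p$, write
$$\scl(w(p)) = -\tfrac{1}{2}\max_{y \in Y_{w(p)}}\bigl(\chi_o(v_A(p)) + \chi_o(v_B)\bigr), \qquad y=(v_A(p),v_B)\in V_A(p)\times V_B,$$
with $\chi_o = \kappa - |v|/2$. Since $\rho_p$ is constant on $B$, the cone $V_B$, the function $\chi_o$ on $V_B$, and every gluing and homology constraint involving $v_B$ is independent of $p$. So the entire $p$-dependence is encoded in the family $V_A(p)$, its Klein function $\kappa_p$, and the affine constraints defining $Y_{w(p)}$.

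First I would establish the upper bound $\scl(w(p)) \le \scl(w)$ for every $p$: since $\rho_p$ is surjective, any admissible surface for $w$ pushes forward via the induced map of classifying spaces to an admissible surface for $w(p)$ of the same degree and Euler characteristic. Hence it suffices to prove $\liminf_{p\to\infty}\scl(w(p)) \ge \scl(w)$. For this lower bound, I would identify $V_A(\infty) \subset V_A(p)$ as a codimension-one face for all large $p$, and check that the affine constraints (gluing and homology) defining $Y_w$ are exactly the restrictions to $V_A(\infty)\times V_B$ of those defining $Y_{w(p)}$. This yields an inclusion $Y_w \hookrightarrow Y_{w(p)}$. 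By Lemma~\ref{disks_in_subspace_lemma}, the Klein functions satisfy $\kappa_p|_{V_A(\infty)} \to \kappa_\infty$ pointwise (hence uniformly on compact sets, as both are piecewise linear), and since $|v_A|$ is independent of $p$ on $V_A(\infty)$, we get $\chi_p \to \chi$ pointwise on $Y_w$. Choosing $y^{\ast} \in Y_w$ attaining $\max_{Y_w}\chi$, for large $p$ we have $y^{\ast} \in Y_{w(p)}$, so
$$\max_{Y_{w(p)}} \chi_p \;\ge\; \chi_p(y^{\ast}) \;\longrightarrow\; \chi(y^{\ast}) \;=\; \max_{Y_w}\chi,$$
which gives the required lower bound.

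The main obstacle is verifying that the combinatorial data encoding $Y_w$ really does embed into that of $Y_{w(p)}$ in the desired way. Concretely, one must check that the edge sets $T(A)$, $T_2(A)$ for $w$ are in canonical bijection with those for $w(p)$ (so that $V_A(\infty)$ sits naturally inside $V_A(p)$), and that the kernel of $h$ used to cut out $V_A$ from $W_A$ for $w$ is exactly the intersection with $V_A(\infty)$ of the corresponding kernel for $w(p)$. This is a matter of tracking how the surjection $\rho_p$ on $A$ acts on the loop-labeling data: surgery preserves the partition of the defining circles into $\tau$-edges and $\sigma$-edges, and only rescales how $h$ evaluates on edge labels, so the compatibility should follow from the explicit description of $V_A$ given before Lemma~\ref{disks_in_subspace_lemma}. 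Once this combinatorial matching is in place, the theorem follows from the monotonicity upper bound together with the pointwise convergence of Klein functions.
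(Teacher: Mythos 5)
Your upper bound via monotonicity is correct, and so is the inclusion $Y_w \hookrightarrow Y_{w(p)}$, but the second half of the argument has the inequality pointing the wrong way and therefore does not establish the needed direction. Recall $\scl(w(p)) = -\tfrac12\max_{Y_{w(p)}}\chi_p$. Choosing $y^\ast \in Y_w$ realizing $\max_{Y_w}\chi$ and using $Y_w \subset Y_{w(p)}$ yields $\max_{Y_{w(p)}}\chi_p \ge \chi_p(y^\ast) \to \chi(y^\ast) = \max_{Y_w}\chi$, which after dividing by $-2$ gives $\limsup_p \scl(w(p)) \le \scl(w)$. That is the same inequality you already proved from monotonicity of $\scl$ under the surjections $\rho_p$, not the lower bound $\liminf_p \scl(w(p)) \ge \scl(w)$. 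What the inclusion argument fails to rule out is the existence of points $y \in Y_{w(p)}\setminus Y_w$ where $\chi_p(y)$ is substantially larger than $\max_{Y_w}\chi$, which is precisely what would drive $\scl(w(p))$ strictly below $\scl(w)$ in the limit.

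The hard content of the theorem, which the paper supplies and your proposal does not, is a control on $\chi$ away from the codimension-one face $V_A(\infty)\times V_B$. The paper passes to the Hausdorff limit $Y_w' \subset V_A'(\infty)\times V_B$ of the $Y_{w(p)}$, notes that $|v_A|$ is constant on $Y_w'$ (inherited from Proposition~\ref{norm_is_linear}), and then uses Lemma~\ref{disks_in_subspace_lemma} to see that only disk vectors in $V_A(\infty)$ contribute to the limiting Klein function. Since $|v_A|\ge 0$ on $V_A'(\infty)$, a level set of $|v_A|$ that is a supporting hyperplane for $\conv(\D_A(\infty)+V_A(\infty))$ is also one for $\conv(\D_A(\infty)+V_A'(\infty))$; hence the maximum of $\chi$ over $Y_w'$ is already attained on $Y_w$. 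That supporting-hyperplane step is exactly what your proposal is missing, and without it the pointwise convergence of Klein functions on $Y_w$ alone cannot produce the lower bound on $\scl(w(p))$.
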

\begin{proof}
Denote $\lim_{p \to \infty} \kappa_p=\kappa_\infty'$, thought of
as a function on $V_A'(\infty)$. Denote by
$Y_w' \subset V_A'(\infty) \times V_B$ the limit as $p \to \infty$ of
$Y_w(p) \subset V_A(p) \times V_B$. Since $|v_A|$ is constant on each $Y_w(p)$,
it follows that $|v_A|$ is also constant on $Y_w'$. Lemma~\ref{disks_in_subspace_lemma}
implies that the only disk vectors in $V_A'(\infty)$ which contribute to
$\kappa_\infty'$ are those in $V_A(\infty)$.
Since $|v_A|$ is non-negative on $V_A'(\infty)$, a level set of $|v_A|$ which is a supporting
hyperplane for $\conv(\D_A(\infty)+V_A(\infty))$ is also a supporting hyperplane
for $\conv(\D_A(\infty)+V_A'(\infty))$. It follows that $\chi$ restricted
to $Y_w'$ is maximized in $Y_w$. The result follows by applying
Theorem~\ref{rationality_theorem}.
\end{proof}

\begin{remark}
By monotonicity of $\scl$ under homomorphisms one has the inequality $\scl(w(p)) \le \scl(w)$ for all $p$.
Thus surgery ``explains'' the existence of many nontrivial accumulation points in the $\scl$ spectrum of
a free group. However it should also be pointed out that equality is sometimes achieved in families,
so that $\scl(w(p))=\scl(w)$ for all $p$ (for example, under the conditions discussed
in Example~\ref{monotone_words}).
\end{remark}

It is interesting to note that the limit does {\em not} depend on the particular
surgery family.

\begin{example}
Let $w=a^2c^2ba^{-1}b^{-1}c^{-1}ba^{-1}c^{-1}b^{-1}$ where $[a,c]=\id$, and $b$ generates a free summand. 
Consider the line of surgeries defined by $\rho_p(a)=a$, $\rho_p(b)=b$ and $\rho_p(c)=pa$. In this case, 
$w(p)=a^{2+2p}ba^{-1}b^{-1}a^{-p}ba^{-1-p}b^{-1}$. Then
$$\scl(w(p))=\frac {4p+3}{4p+4} \text{ if }p\text{ is odd, and }\frac {2p+1}{2p+2}\text{ if }p\text{ is even.}$$
Define $\sigma =\bigl( \begin{smallmatrix} 1&0\\ 1&1 \end{smallmatrix} \bigr)$ and consider the line of surgeries 
obtained by the same homomorphisms $\rho_p$ precomposed with the automorphism $\sigma$ of $\Z^2$. Then
$$\scl(w(p)^\sigma)=\frac {8p+3}{8p+4} \text{ if }p\text{ is odd, and }\frac {4p+5}{4p+6}\text{ if }p\text{ is even.}$$
Both sequences of numbers converge to $1=\scl(w)$ as $p \to \infty$.
Note that even when the values of $\scl(w(p))$ and $\scl(w(q)^\sigma)$ agree, the corresponding elements are
typically not in the same $\Aut$ orbit in $F_2$.
\end{example}

\begin{remark}
The precise algebraic form of $\scl(w(p))$ on surgery families is analyzed in a
forthcoming paper of Calegari-Walker \cite{Calegari_Walker},
where it is shown quite generally that $\scl(w(p))$ is a {\em ratio of quasipolynomials} in $p$,
for $p\gg 0$.
\end{remark}

\subsection{Computer implementation}

The algorithm described in this paper has been implemented by Alden Walker in the program
{\tt sss}, available from the author's website \cite{Walker}. This allows computations that would
be infeasible with {\tt scallop}; e.g.\/ $\scl(aba^{-98}ba^{-1}b^{-3}+a^{98}b) = 195/196$.
The runtime in this implementation appears to
be doubly exponential in the number of $A$ and $B$ arcs, and the practical limit for this number
appears to be no more than about $5$ (excluding Abelian loops).

Some theoretical explanation for this
difficulty comes from recent work of Lukas Brantner \cite{Brantner}, who shows (amongst
other things) that the
problem of deciding whether a given disk vector $d\in \D_A$ is {\em essential} --- i.e.\/
cannot be written as $d=e+v$ for $e\in \D_A$ and $v\in V_A-0$ --- is $\textnormal{\sf coNP}$-complete.

\section{Acknowledgment}

I would like to thank Lukas Brantner, Jon McCammond, Alden Walker and the referee
for helpful comments and corrections. While writing this paper I was partially supported 
by NSF grants DMS 0707130 and DMS 1005246.

\end{document}